\tikzset{knotarrow/.pic={ \draw[edge, <-] (0,0) -- +(-.001,0);}}
\tikzset{edge/.style={line width=0.8}}
\tikzset{wall/.style={very thick}}
\tikzset{->-/.style n args={2}{decoration={markings, mark=at position #1 with {\arrow{#2}}}, postaction={decorate}}} 
\tikzset{-o-/.code 2 args={\ifstreqF{#2}{} 
{\ifstreqTF{#2}{>}
   {\pgfkeysalso{decoration={markings,mark=at position #1 with {\arrow[scale=0.8]{#2}}}
                    ,postaction={decorate}}
    }
   {\ifstreqTF{#2}{<}
       {\pgfkeysalso{decoration={markings,mark=at position #1 with {\arrow[scale=0.8]{#2}}}
                    ,postaction={decorate}}
        }
       {\pgfkeysalso{decoration={markings,
                    mark=at position #1 with
                    {\draw[black, fill={#2}] circle[radius=2pt];}}
                    ,postaction={decorate}}
        }
     }
  }}}
\newtheorem{theorem}{Theorem}[section]
\newtheorem{lemma}[theorem]{Lemma}
\newtheorem{definition}[theorem]{Definition}
\newtheorem{corollary}[theorem]{Corollary}
\newtheorem{proposition}[theorem]{Proposition}
\newtheorem{remark}[theorem]{Remark}
\newcommand{\bp}{\begin{proposition}}
\newcommand{\ep}{\end{proposition}}
\newcommand{\bpr}{\begin{proof}}
\newcommand{\epr}{\end{proof}}
\newcommand{\bt}{\begin{theorem}}
\newcommand{\et}{\end{theorem}}
\newcommand{\bl}{\begin{lemma}}
\newcommand{\el}{\end{lemma}}
\newcommand{\bcr}{\begin{corollary}}
\newcommand{\ecr}{\end{corollary}}
\newcommand{\be}{\begin{equation}}
\newcommand{\ee}{\end{equation}}
\newcommand{\bes}{\begin{equation*}}
\newcommand{\ees}{\end{equation*}}
\newcommand{\ba}{\begin{align}}
\newcommand{\ea}{\end{align}}
\newcommand{\bas}{\begin{align*}}
\newcommand{\eas}{\end{align*}}
\begin{document}
\bibliographystyle{plain}

\title{Representation-reduced stated skein modules and algebras}
\author{Zhihao Wang}
\address{Zhihao Wang, School of Physical and Mathematical Sciences, Nanyang Technological University, 21 Nanyang Link Singapore 637371}
\email{ZHIHAO003@e.ntu.edu.sg}
\address{University of Groningen, Bernoulli Institute, 9700 AK Groningen, The Netherlands}
\email{wang.zhihao@rug.nl}

\keywords{Skein theory, Frobenius map, Splitting map}

 \maketitle



\def \cF {\mathcal{F}}
\def \SMQ {\mathscr{S}_{q^{1/2}}(M,\mathcal{N})}
\def \SMQP {\mathscr{S}_{q^{1/2}}(M^{'},\mathcal{N}^{'})}

\def \q {q^{\frac{1}{2}}}

\def \CSM {\mathscr{S}_1(M,\mathcal{N})}

\def \SMN {\mathscr{S}_{q^{1/2}}(M,\mathcal{N})^{(N)}}

\def \S {\mathscr{S}_{q^{1/2}}}

\def \sS {\mathscr{S}}

\def \MN {(M,\mathcal{N})}

\def \sSq {\S(\Sigma)}
\def \sSo {\sS_1(\Sigma)}

\def \bC {\mathbb{C}}

\def \bN {\mathbb{N}}

\def \bZ {\mathbb{Z}}

\def \SZ {Z_{q^{1/2}}(\Sigma)}
\def\sSN {\sSq^{(N)}}
\def \tSZ {\widetilde{{Z_{q^{1/2}}(\Sigma)}}}
\def\tSN {\widetilde{\sSq^{(N)}}}

\begin{abstract}

For any marked three manifold $\MN$ and any quantum parameter $q^{\frac{1}{2}}$ (a nonzero complex number), we use $\mathscr{S}_{q^{1/2}}(M,\mathcal{N})$ to denote the stated skein module of $(M,\mathcal{N})$.
  When $q^{\frac{1}{2}}$ is a root of unity of odd order, the commutative algebra $\mathscr{S}_1(M,\mathcal{N})$ acts on $\mathscr{S}_{q^{1/2}}(M,\mathcal{N})$. For any maximal ideal $\rho$ of $\mathscr{S}_1(M,\mathcal{N})$, define $\mathscr{S}_{q^{1/2}}(M,\mathcal{N})_{\rho} = \mathscr{S}_{q^{1/2}}(M,\mathcal{N})\otimes _{\mathscr{S}_1(M,\mathcal{N})} (\mathscr{S}_1(M,\mathcal{N})/\rho)$.

  We prove the splitting map for $\mathscr{S}_{q^{1/2}}(M,\mathcal{N})$ respects the $\mathscr{S}_1(M,\mathcal{N})$-module structure, so it reduces to the splitting map for
  $\mathscr{S}_{q^{1/2}}(M,\mathcal{N})_{\rho}$. We prove the splitting map for $\mathscr{S}_{q^{1/2}}(M,\mathcal{N})_{\rho}$ is injective if there exists at least one component of $\mathcal{N}$ such that this component and the boundary of the splitting disk belong to the same component of $\partial M$. We also prove the representation-reduced stated skein module of the marked handlebody is an irreducible Azumaya representation of the stated skein algebra of its boundary.

  Let $M$ be an oriented connected closed three manifold. For any positive integer $k$, we use
  $M_{k}$ to denote the marked three manifold obtained from $M$ by removing $k$ open three dimensional balls and adding one marking to each newly created sphere boundary component. We prove $\text{dim}_{\mathbb{C}}
  \mathscr{S}_{q^{1/2}}( M_{k})_{\rho} = 1$ for any maximal ideal $\rho$ of $\mathscr{S}_1(M_k)$.

\end{abstract}

	\def \cN {\mathcal{N}}

\section{Introduction}
A {\bf marked three manifold} is a pair $\MN$, where $M$ is an oriented three manifold, and $\cN$ is a one dimensional submanifold of $\partial M$ consisting of finitely many oriented  open intervals such that there is no intersection between the closure of any two open intervals. The component of $\cN$ is called the {\bf marking} of $\MN$.

For any marked three manifold $\MN$ and any quantum parameter $\q$, the stated skein module $\S\MN$ (refer to section \ref{stated} for the definition) is a vector space over the complex field $\bC$. If $\MN$ is the thickening of a marked surface (refer to section \ref{stated} for the definition), then $\S\MN$ has an algebra structure (in this case, we call the stated module as the stated skein algebra).
To simplify the notation, we will use $\mathscr{S}$ to denote $\mathscr{S}_{q^{1/2}}$.

\def \SMN {\mathscr{S}(M,\mathcal{N})^{(N)}}
\def \SMQ {\mathscr{S}(M,\mathcal{N})}
\def \SMQP {\mathscr{S}(M^{'},\mathcal{N}^{'})}
\def \S {\mathscr{S}}
\def \max {\text{MaxSpec}(\sS_1\MN)}

When $\q$ is a root of unity of odd order, there exists a linear map 
$$\cF:\sS_{1}\MN\rightarrow \S\MN,$$
called the Frobenius map \cite{bloomquist2020chebyshev,bonahon2016representations}.
When $\MN$ is the thickening of a marked surface, the Frobenius map $\cF$
is an algebra embedding and $\text{Im}\cF$ is contained in the center of $\S\MN$ \cite{bloomquist2020chebyshev,bonahon2016representations}.

It is well-known that $\sS_1\MN$ has a commutative algebra structure, defined by taking the disjoint union of stated $\MN$-tangles. We use
$\text{MaxSpec}(\sS_1\MN)$ to denote the set of all maximal ideals of $\sS_1\MN$.  Then $\text{MaxSpec}(\sS_1\MN)$ is actually isomorphic to a representation variety when every component of $M$ contains at least one marking, and is isomorphic to a character variety when $\mathcal{N} = \emptyset$ \cite{bullock1997rings,wang2023stated}. 

When $\q$ is a root of unity of odd order,
 the commutative algebra $\sS_1\MN$ has an action on $\S\MN$ via the Frobenius map, see subsection \ref{sub3.1} for the definition of this action.  Then for any $\rho\in \max$, define $$\S\MN_{\rho}
 =\S\MN\otimes_{\sS_1\MN} (\sS_1\MN/\rho)\simeq \S\MN/\rho\cdot\S\MN.$$
 The parallel definition for this quotient vector space for the skein module (non-stated case) is 
defined in \cite{frohman2023sliced}. 
 The author proved $\S\MN$ is finitely generated over $\sS_1\MN$ when $M$ is compact \cite{wang2023finiteness}. This implies, for any $\rho\in \max,$ the vector space $\S\MN_{\rho}$ is finite dimensional when $M$ is compact \cite{wang2023finiteness}.

When $\MN$ is the thickening of a marked surface $(\Sigma,\mathcal{P})$, it is easy to show $\S(\Sigma,\mathcal{P})_{\rho}$ is a quotient algebra of $\S(\Sigma,\mathcal{P})$ for any $\rho\in\text{MaxSpec}(\sS_1(\Sigma,\mathcal{P}))$, where  $\S(\Sigma,\mathcal{P})_{\rho}$ denotes $\S\MN_{\rho}$ and $\S(\Sigma,\mathcal{P})$ denotes $\S\MN$. This quotient algebra  is very important to understand the representation theory of the stated skein algebra.  Actually any irreducible representation of $\S(\Sigma,\mathcal{P})$ reduces to an irreducible representation of $\S(\Sigma,\mathcal{P})_{\rho}$ for some $\rho\in\text{MaxSpec}(\sS_1(\Sigma,\mathcal{P}))$. 


Let $\MN$ be a marked three manifold. For any properply embedded disk $D$ in $M$ and any embedded oriented open interval $u\subset D$, 
we use $\text{Cut}_{(D,u)}\MN$ to denote the marked three manifold obtained from $\MN$ by cutting along $(D,u)$, please refer to subsection \ref{a} for more details.
There exists a linear map 
\begin{equation}\label{eqs}
\Theta:\S\MN\rightarrow \S(\text{Cut}_{(D,u)}\MN)
\end{equation}
 called the splitting map.
When $\q = 1$, we have $\Theta:\sS_1\MN\rightarrow \sS_1(\text{Cut}_{(D,u)}\MN)$ is an algebra homorphism. Then it induces a map $$\Theta^{*}:\text{MaxSpec}(\sS_1(\text{Cut}_{(D,u)}\MN))\rightarrow \max.$$
We prove, for any $\rho\in \text{MaxSpec}(\sS_1(\text{Cut}_{(D,u)}\MN))$, the splitting map in equation \eqref{eqs} induces a linear map
$$\Theta_{\rho}:\S\MN_{\Theta^{*}(\rho)}\rightarrow \S(\text{Cut}_{(D,u)}\MN)_{\rho}.$$

\begin{theorem}\label{1.1}
	Let $\MN$ be a marked three manifold,  let $D$ be a properly embedded  disk in $M$, and let $u$ be an embedded oriented open interval in $D$. Suppose the component $V$ of $\partial M$  contains  $\partial D$ and $V\cap \cN\neq\emptyset$. Then, for any $\rho\in \text{MaxSpec}(\sS_1(\text{Cut}_{(D,u)}\MN))$, we have
	$\Theta_{\rho}$
	is injective.
\end{theorem}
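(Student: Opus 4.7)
The strategy is to exhibit $\Theta_{\rho}$ as an injection by constructing, after reduction by $\rho$, a one-sided inverse built from a gluing-type state sum, with the hypothesis $V\cap\cN\neq\emptyset$ being exactly what allows this gluing to be defined at the level of the reduced modules.

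First I would confirm that $\Theta_{\rho}$ is well-defined. The classical splitting $\Theta_{1}\colon\sS_{1}\MN\to\sS_{1}(\mathrm{Cut}_{(D,u)}\MN)$ is an algebra homomorphism, and the Frobenius maps intertwine with splitting in the sense that $\Theta\circ\cF=\cF'\circ\Theta_{1}$ (where $\cF'$ is the Frobenius for the cut manifold). Combined with the $\sS_{1}\MN$-linearity of $\Theta$ established earlier in the paper, this forces $\Theta\bigl(\Theta^{*}(\rho)\cdot\S\MN\bigr)\subset\rho\cdot\S(\mathrm{Cut}_{(D,u)}\MN)$, so $\Theta_{\rho}$ descends as claimed.

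Second, I would use the marking hypothesis as follows. Because $V\cap\cN\neq\emptyset$, after an ambient isotopy one can assume $\partial D$ is joined to a marking $m\subset V$ by a short arc on $V$, so that the two copies of $D$ appearing in $\mathrm{Cut}_{(D,u)}\MN$ sit adjacent to two new markings that can both be pushed onto $m$. In the presence of such a nearby marking, the Hopf-algebra / co-$R$-matrix relations at $m$ permit the definition of a linear ``sewing'' map
\[
G\colon\S(\mathrm{Cut}_{(D,u)}\MN)\longrightarrow\S\MN
\]
that glues stated endpoints on the two new markings back together by inserting bridge arcs through (a neighborhood of) $m$ and summing over states. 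A direct state-sum calculation should give
\[
G\circ\Theta \;=\; \cF(h)\cdot\mathrm{id}
\]
for a specific $h\in\sS_{1}\MN$ that depends on $m$ but not on the tangle. Checking that $G$ respects the $\sS_{1}\MN$-action (and so descends to the quotients) is the standard sort of move-the-skein-across-the-disk argument, again made possible by the marking on $V$.

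Third (the main obstacle), one must show that $h$ lies outside every maximal ideal $\Theta^{*}(\rho)\in\max$, so that its image in $\sS_{1}\MN/\Theta^{*}(\rho)\cong\bC$ is a nonzero scalar. I expect $h$ to be identifiable classically with a non-vanishing invariant of the representation/character variety attached to the marking $m$ (for instance, a quantity forced to be nonzero by the fact that stated boundary values are bound to a nondegenerate pairing). Once this is verified, the reduction of $G\circ\Theta=\cF(h)\cdot\mathrm{id}$ modulo $\rho$ becomes a nonzero scalar multiple of the identity on $\S\MN_{\Theta^{*}(\rho)}$, giving $G_{\rho}\circ\Theta_{\rho}\in\bC^{\times}\cdot\mathrm{id}$ and thus injectivity of $\Theta_{\rho}$. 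The delicate point will be verifying the scalar is nonzero uniformly across $\max$, rather than only generically; this is precisely where I expect the hypothesis that \emph{some} component of $\cN$ shares the boundary component with $\partial D$ to be used, as opposed to the weaker hypothesis $\cN\neq\emptyset$.
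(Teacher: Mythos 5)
Your plan is genuinely different from the paper's, and as written it has a gap that you acknowledge but do not close. The paper does not try to construct a one-sided inverse $G$ or a unit $h$ at all. Instead it invokes a structural identity from a prior paper (recorded here as Lemma~\ref{key}, citing Proposition~6.3 of~\cite{wang2023stated}):
\[
QF_{u_1,u_2}\circ h_{u_2}\circ \Theta_{(D,u)} \;=\; \varphi_{*}\circ l_{*},
\]
where $l\colon\MN\hookrightarrow(M,\mathcal N')$ is the embedding that simply \emph{adds one extra marking} $e$ on the same boundary component as $\partial D$, $\varphi$ is an isomorphism $(M,\mathcal N')\simeq(\mathrm{Cut}_{(D,u)}\MN)_{u_1\mathbb{T} u_2}$, $QF$ is the triangle-gluing isomorphism, and $h_{u_2}$ is the half-twist isomorphism. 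Each of $QF$, $h_{u_2}$, and $\varphi_*$ is shown (Lemmas~\ref{QF}, \ref{change}, \ref{embedding}) to behave well under Frobenius, so all three descend to isomorphisms at the $\rho$-reduced level. Therefore $\ker\Theta_\rho = \ker l_\rho$, and the remaining work is Lemma~\ref{key2}(a): $l_\rho$ is injective because, by Theorem~6.10 of~\cite{wang2023stated}, $\S(M,\mathcal N')_\rho\simeq\S\MN_{l^*(\rho)}\otimes\S(\mathbb B)_{\rho'}$, making $l_\rho$ essentially ``$x\mapsto x\otimes(\text{basis vector})$''. There is no scalar $h$ anywhere; the nearby marking is used to get the tensor factorization, not to produce a unit.

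The concrete gap in your argument is step three: you would need the scalar $h$ to satisfy $\Theta^*(\rho)(h)\neq 0$ for \emph{every} $\rho\in\mathrm{MaxSpec}(\sS_1(\mathrm{Cut}_{(D,u)}\MN))$, not just generically. You flag this as ``the delicate point'' and then offer only a heuristic (``I expect $h$ to be identifiable with a non-vanishing invariant''). Without an explicit formula for $G$ and for $h$, there is no way to verify this, and it is exactly the kind of uniformity claim that tends to fail on the non-generic locus of $\mathrm{MaxSpec}$; recall that the analogous non-stated statement in~\cite{frohman2023sliced} does require restrictions on $\rho$, so a blanket non-vanishing claim would need a real argument. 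Your step one (that $\Theta_\rho$ is well-defined) matches Lemma~\ref{splitting} and Lemma~\ref{con}, and your instinct that the hypothesis $V\cap\cN\neq\emptyset$ is what makes a gluing/sewing construction available near $\partial D$ is the right intuition — the paper realizes that intuition concretely via the $QF$ and half-twist maps rather than via a state-sum inverse. If you want to make your route work, the place to look is whether a gluing-back map can be written down at $q^{1/2}=1$ first and then lifted; but the cleaner path is the one the paper takes, converting the splitting map into an ``add a marking'' map by explicit isomorphisms.
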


We use $H_g$ to denote the genus $g$ handlebody. For any positive integer $k$, we use $H_{g,k}$ to denote the marked three manifold obtained from $H_g$ by adding $k$ markings on $\partial H_g$. Then $H_{g,k}$ is defined up to isomorphism.
We use $\Sigma_g$ to denote the closed surface of genus $g$. For any positive integer $k$, we use $\Sigma_{g,k}$ to denote the marked surface  obtained from $\Sigma_g$ by removing $k$ open disks and equipping each newly created boundary component with one marked point. Then the stated skein algebra $\S(\Sigma_{g,k})$ acts on $\S(H_{g,k})$, please refer to section \ref{5} for more details.

\begin{theorem}\label{1.2}
	Let $k$ be a positive integer.
	For any $\rho\in \text{MaxSpec}(\sS_1(H_{g,k}))$, we have $\S(H_{g,k})_{\rho}$ is an irreducible representation of $\S(\Sigma_{g,k})$. Meanwhile, it is an Azumaya  representation of $\S(\Sigma_{g,k})$, please refer to section \ref{5} for the definition of the Azumaya  representation.
\end{theorem}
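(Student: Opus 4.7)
The plan is to reduce the handlebody case to the three-ball case via iterated cutting along meridian disks, and then combine a dimension count with the Artin--Procesi theorem to obtain both irreducibility and the Azumaya property simultaneously.

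First, I would choose $g$ pairwise disjoint properly embedded meridian disks $D_1,\ldots,D_g\subset H_g$ whose simultaneous cut produces a three-ball, together with embedded oriented open intervals $u_i\subset D_i$. Since each meridian disk is non-separating, the boundary of the intermediate handlebody remains connected throughout the cutting process and still contains all $k\geq 1$ original markings. Hence the hypothesis of Theorem \ref{1.1} is met at every stage, and the iterated $\rho$-reduced splitting map assembles into an injective linear map
\[
\S(H_{g,k})_\rho \;\hookrightarrow\; \S(B^3,\cN')_{\rho'},
\]
where $(B^3,\cN')$ is the marked three-ball with $k+2g$ markings and $\rho'$ is obtained from $\rho$ by the iterated pullback $\Theta^{*}$.

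Second, $(B^3,\cN')$ is the thickening of a disk with $k+2g$ markings, whose stated skein module admits an explicit monomial basis, from which $\dim_\bC\S(B^3,\cN')_{\rho'}$ can be computed directly. I expect the result to equal the PI degree of $\S(\Sigma_{g,k})$ at $\rho$, which is independently determined by the order of $q^{1/2}$, the genus $g$, and $k$ via the Frobenius map together with classical data from the associated character/representation variety. Because the PI degree bounds the dimension of every irreducible $\S(\Sigma_{g,k})$-representation from above, while the injection above bounds $\dim_\bC \S(H_{g,k})_\rho$ from above by the ball computation, matching the two numbers forces equality throughout the chain.

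Finally, once $\dim_\bC \S(H_{g,k})_\rho$ is shown to equal the PI degree, the Artin--Procesi theorem implies that the image of $\S(\Sigma_{g,k})$ in $\text{End}(\S(H_{g,k})_\rho)$ is a full matrix algebra, yielding both irreducibility and the Azumaya property at a single stroke. The main obstacle I expect is the explicit dimension count on the ball side and the verification that $\rho'$ lies in the Azumaya locus of $\S(\Sigma_{g,k})$; the transport of maximal ideals through the iterated $\Theta^{*}$ is the delicate point, one which Theorem \ref{1.1} is tailor-made to control.
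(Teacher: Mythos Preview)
Your approach has a genuine gap in the dimension count. After cutting $H_{g,k}$ along $g$ meridian disks you land in $H_{0,k+2g}$, the three-ball with $k+2g$ markings, and by the very formula you would invoke (Theorem 4.10 in \cite{wang2023frobenius}, also used in Lemma \ref{key2}) one has
\[
\dim_{\bC}\S(H_{0,k+2g})_{\rho'} \;=\; N^{3(k+2g-1)},
\]
whereas the PI degree of $\S(\Sigma_{g,k})$ is $N^{3(g+k-1)}$. These coincide only when $g=0$; for $g\ge 1$ the injection of Theorem \ref{1.1} gives you a strictly too-large upper bound, and the ``matching the two numbers'' step fails. The splitting map is injective but far from surjective here, so no tight dimension information comes out of it.

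There is also a logical issue in the final step: even if you knew $\dim_{\bC}\S(H_{g,k})_\rho$ equals the PI degree, Artin--Procesi does not by itself force the action map $\S(\Sigma_{g,k})\to\mathrm{End}(\S(H_{g,k})_\rho)$ to be surjective. You still need to know that the classical shadow lies in the Azumaya locus, so that the quotient $\S(\Sigma_{g,k})_\sigma$ is already a full matrix algebra; only then does a module of dimension equal to the PI degree become automatically irreducible. You flag this yourself as ``the main obstacle,'' but your outline gives no mechanism to establish it. The paper handles both issues externally: it cites Theorem 4.10 of \cite{wang2023frobenius} for the exact dimension $N^{3(g+k-1)}$, and it invokes Theorem 8.1 of \cite{karuo2023classification}, using that the boundary-adjacent arcs $\alpha(p)$ become trivial in $H_g$, to place the classical shadow in the Azumaya locus. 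With those two ingredients in hand the conclusion is immediate; the splitting map of Theorem \ref{1.1} is not used at all in this proof.
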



Let $M$ be an oriented connected closed three manifold. For any positive integer $k$, we use 
$M_{k}$ to denote the marked three manifold obtained from $M$ by removing $k$ open three dimensional balls and adding one marking to each newly created sphere boundary component.

\begin{theorem}\label{1.3}
	Let $M$ be any oriented connected closed three manifold, and let $k$ be any positive integer. For any $\rho\in\text{MaxSpec}(\sS_1(M_{k}))$, we have $\S(M_k)_{\rho} \simeq \bC$ (as $\bC$-vector spaces).
	
\end{theorem}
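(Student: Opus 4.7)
My plan is to combine a Heegaard decomposition of $M_k$ with Theorems \ref{1.1} and \ref{1.2}. Fix a Heegaard splitting $M = H_g \cup_{\Sigma_g} H_g$ of genus $g$ and arrange the $k$ removed balls so that each straddles $\Sigma_g$, meeting it in a single disk, with the marking on each boundary sphere crossing $\Sigma_g$ transversely at one point. Then $M_k$ decomposes as $N_+ \cup_S N_-$, where $S$ is naturally identified with $\Sigma_{g,k}$ (the surface obtained from $\Sigma_g$ by removing $k$ open disks and placing one marked point on each resulting boundary circle, each circle lying on a sphere component of $\partial M_k$), and each marked handlebody $N_\pm$ is isomorphic to $H_{g,k}$.

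Next, I would realize the cut along $S$ as a finite composition of disk cuts $D_1, \dots, D_m \subset M_k$. Each $D_j$ is taken to be a sub-disk of the evolving Heegaard surface whose boundary lies on a sphere component of the current boundary; since every such sphere carries a marking, the hypothesis of Theorem \ref{1.1} is satisfied at every step. Iterating Theorem \ref{1.1} then yields an injection
\begin{equation*}
\Theta_\rho \colon \S(M_k)_\rho \hookrightarrow \S(H_{g,k'})_{\rho_+} \otimes_{\bC} \S(H_{g,k'})_{\rho_-},
\end{equation*}
where $k' = k + m$ absorbs the additional markings introduced by the cuts and $\rho_\pm \in \text{MaxSpec}(\sS_1(H_{g,k'}))$ are determined by $\rho$ via the classical ($\q = 1$) splitting homomorphism.

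By the Heegaard gluing formula for marked stated skein modules, reduced compatibly modulo $\rho$, the image of $\Theta_\rho$ is identified with the balanced tensor product $\S(H_{g,k'})_{\rho_+} \otimes_{\S(\Sigma_{g,k'})_\sigma} \S(H_{g,k'})_{\rho_-}$, where $\sigma$ is the common restriction of $\rho_\pm$ to $\sS_1(\Sigma_{g,k'})$ (well defined because the classical splitting factors through the balanced tensor). By Theorem \ref{1.2}, the quotient $\S(\Sigma_{g,k'})_\sigma$ is a matrix algebra $M_n(\bC)$ and each factor is its unique simple module (one as a left, the other as a right module), so the balanced tensor product is one-dimensional over $\bC$. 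Nonvanishing of $\S(M_k)_\rho$ follows from the finite generation of $\S(M_k)$ over $\sS_1(M_k)$ proved in \cite{wang2023finiteness}, together with the faithfulness of this module (via injectivity of the Frobenius map), which by the going-up theorem gives $\rho \cdot \S(M_k) \subsetneq \S(M_k)$. Combined, $\S(M_k)_\rho \simeq \bC$. The main technical obstacle is the explicit construction of the disks $D_1, \dots, D_m$ realizing the Heegaard cut while respecting the hypothesis of Theorem \ref{1.1} at every intermediate stage, and the careful justification of the identification of $\text{Im}(\Theta_\rho)$ with the balanced tensor product at the $\rho$-reduced level.
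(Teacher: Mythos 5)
Your high-level strategy shares the core of the paper's argument: present $M_k$ via a Heegaard splitting, identify $\S(M_k)_\rho$ with a balanced tensor product of the two reduced handlebody modules over the reduced stated skein algebra of the Heegaard surface, and conclude $\bC^d \otimes_{\text{Mat}_d(\bC)} \bC^d \simeq \bC$ from Theorem~\ref{1.2}. But the route you propose to reach that tensor product has a genuine gap. You want to realize the cut along $\Sigma_{g,k}$ as a composition of disk cuts $D_1,\dots,D_m$ each with $\partial D_j$ on a sphere boundary component, so that Theorem~\ref{1.1} applies at every stage. A disk cut whose boundary lies on a sphere component can only replace that sphere by two new spheres; it never changes the genus of any boundary component. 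Iterating such cuts therefore produces only manifolds whose boundary components are all spheres, and can never yield two marked handlebodies $H^\pm_{g,k}$, whose boundaries are closed genus-$g$ surfaces (for $g\geq 1$). Quantitatively, $\chi(\Sigma_{g,k}) = 2-2g-k \leq 0$ once $2g+k\geq 2$, while $m$ disjoint disks contribute $+m$, so a union of disk cuts cannot reproduce the surface cut. Your marking count $k' = k+m$ is also inconsistent with the paper's presentation $M_k \simeq H^- \cup_{f\circ g^{-1}} H^+$, in which each handlebody carries exactly $k$ markings.

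The paper avoids disk cuts and Theorem~\ref{1.1} entirely: it starts from the gluing \emph{isomorphism} $\S(H^-)\otimes_{\S(\Sigma_{g,k})}\S(H^+)\simeq \S(H^-\cup_{f\circ g^{-1}}H^+)$ (Theorem 6.5 of \cite{costantino2022stated}), so no separate injectivity is needed. The real technical content, which your proposal compresses into ``reduced compatibly modulo $\rho$,'' is the chain of module-theoretic identifications in Theorem~\ref{main} showing
\begin{equation*}
\S(H^-\cup_{f\circ g^{-1}}H^+)_\rho \simeq \S(H^-)_{(L_-)^*(\rho)} \otimes_{\S(\Sigma_{g,k})_{(L_+\circ f)^*(\rho)}} \S(H^+)_{(L_+)^*(\rho)},
\end{equation*}
using surjectivity of $f_*$ onto $\sS_1(H^\pm)$ together with the tensor identities \eqref{eq_U} and \eqref{eq_UV}; that is where the work lies. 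Finally, your separate nonvanishing argument via finite generation and going-up is unnecessary --- $\bC^d\otimes_{\text{Mat}_d(\bC)}\bC^d$ is literally one-dimensional, so nonvanishing is automatic --- and also not fully justified, since injectivity of $\cF$ (hence faithfulness of $\S(M_k)$ over $\sS_1(M_k)$) is established in the paper only for thickened surfaces, not for general marked three-manifolds.
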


Parallel results for Theorems \ref{1.2} and \ref{1.3} for the skein case (that is the non-stated case) are proved in \cite{frohman2023sliced}. For the non-stated case, they have some restrictions for $\rho$. Theorems \ref{1.2} and \ref{1.3} hold for any $\rho\in \text{MaxSpec}(\sS_1(H_{g,k}))$ or $\rho\in\text{MaxSpec}(\sS_1(M_{k}))$.

{\bf Plan of this paper.}  In section \ref{stated}, we introduce the definition of stated skein modules and algebras.
In section \ref{sec3}, we introduce the Forbenius map and define the representation-reduced stated skein modules and algebras.  In section \ref{666}, we prove Theorem \ref{1.1}. In section \ref{5}, we prove Theorems \ref{1.2} and \ref{1.3}. 

{\bf 
Acknowledgements}:  The research is supported by the NTU  research scholarship from the Nanyang Technological University (Singapore) and the PhD scholarship from the University of Groningen (The Netherlands).

\section{Stated skein modules and algebras}\label{stated}

For a marked three manifold $\MN$, a properly embedded one dimensional submanifold $\alpha$ of $M$  is called an
{\bf $\MN$-tangle} if $\partial\alpha\subset\mathcal{N}$ and $\alpha$ is equipped with a framing  such that  framings at $\partial\alpha$ respect to velocity vectors of $\mathcal{N}$.
If there is a map $s:\partial\alpha\rightarrow \{-,+\}$, then we call $\alpha$ a
 {\bf stated $\MN$-tangle}.

For a marked three manifold $\MN$, 
we use $\text{Tangle}\MN$ to denote the vector space over $\bC$ with all isotopy classes of stated $\MN$-tangles as a basis.
Then the stated skein module $\SMQ$ of $\MN$ is  $\text{Tangle}\MN$ quotient the following relations:
\begin{equation}\label{cross}
\raisebox{-.20in}{
\begin{tikzpicture}
\filldraw[draw=white,fill=gray!20] (-0,-0.2) rectangle (1, 1.2);
\draw [line width =1pt](0.6,0.6)--(1,1);
\draw [line width =1pt](0.6,0.4)--(1,0);
\draw[line width =1pt] (0,0)--(0.4,0.4);
\draw[line width =1pt] (0,1)--(0.4,0.6);
\draw[line width =1pt] (0.6,0.6)--(0.4,0.4);
\end{tikzpicture}
}=
q
\raisebox{-.20in}{
\begin{tikzpicture}
\filldraw[draw=white,fill=gray!20] (-0,-0.2) rectangle (1, 1.2);
\draw [line width =1pt](0.6,0.6)--(1,1);
\draw [line width =1pt](0.6,0.4)--(1,0);
\draw[line width =1pt] (0,0)--(0.4,0.4);
\draw[line width =1pt] (0,1)--(0.4,0.6);
\draw[line width =1pt] (0.6,0.62)--(0.6,0.38);
\draw[line width =1pt] (0.4,0.38)--(0.4,0.62);
\end{tikzpicture}
}
+
 q^{-1}
\raisebox{-.20in}{
\begin{tikzpicture}
\filldraw[draw=white,fill=gray!20] (-0,-0.2) rectangle (1, 1.2);
\draw [line width =1pt](0.6,0.6)--(1,1);
\draw [line width =1pt](0.6,0.4)--(1,0);
\draw[line width =1pt] (0,0)--(0.4,0.4);
\draw[line width =1pt] (0,1)--(0.4,0.6);
\draw[line width =1pt] (0.62,0.6)--(0.38,0.6);
\draw[line width =1pt] (0.62,0.4)--(0.38,0.4);
\end{tikzpicture}
} 
\end{equation}
\begin{equation}\label{unknot}
\raisebox{-.15in}{
\begin{tikzpicture}
\filldraw[draw=white,fill=gray!20] (-0,-0) rectangle (1, 1);
\draw [line width =1pt] (0.5,0.5) circle (0.3);
\end{tikzpicture}
}=-(q^2+q^{-2})
\raisebox{-.15in}{
\begin{tikzpicture}
\filldraw[draw=white,fill=gray!20] (-0,-0) rectangle (1, 1);
\end{tikzpicture}
}
\end{equation}
\begin{equation}\label{arc}
\raisebox{-.26in}{
\begin{tikzpicture}
\filldraw[draw=white,fill=gray!20] (-0,-0) rectangle (1, 1);
\draw [line width =1pt]  (0.5 ,0) arc (-90:225:0.3 and 0.35);
\filldraw[draw=black,fill=black] (0.5,-0) circle (0.09);
\node at (0.3,-0.15) {$-$};
\node at (0.7,-0.15) {\small $+$};
\end{tikzpicture}
}=q^{-\frac{1}{2}}
\raisebox{-.15in}{
\begin{tikzpicture}
\filldraw[draw=white,fill=gray!20] (-0,-0) rectangle (1, 1);
\filldraw[draw=black,fill=black] (0.5,-0) circle (0.09);
\end{tikzpicture}
},\;
\raisebox{-.26in}{
\begin{tikzpicture}
\filldraw[draw=white,fill=gray!20] (-0,-0) rectangle (1, 1);
\draw [line width =1pt]  (0.5 ,0) arc (-90:225:0.3 and 0.35);
\filldraw[draw=black,fill=black] (0.5,-0) circle (0.09);
\node at (0.3,-0.15) {$+$};
\node at (0.7,-0.15) {\small $+$};
\end{tikzpicture}
}=
\raisebox{-.26in}{
\begin{tikzpicture}
\filldraw[draw=white,fill=gray!20] (-0,-0) rectangle (1, 1);
\draw [line width =1pt]  (0.5 ,0) arc (-90:225:0.3 and 0.35);
\filldraw[draw=black,fill=black] (0.5,-0) circle (0.09);
\node at (0.3,-0.15) {$-$};
\node at (0.7,-0.15) {\small $-$};
\end{tikzpicture}
} =0
\end{equation}
\begin{equation}\label{hight}
\raisebox{-.26in}{
\begin{tikzpicture}
\filldraw[draw=white,fill=gray!20] (-0.2,-0) rectangle (1.2, 1);
\draw [line width =1pt](0,1)--(0.4,0.2);
\draw [line width =1pt](1,1)--(0.6,0.2);
\draw [line width =1pt](0.5,0)--(0.6,0.2);
\filldraw[draw=black,fill=black] (0.5,-0) circle (0.09);
\node at (0.3,-0.15) {$+$};
\node at (0.7,-0.15) {\small $-$};
\end{tikzpicture}
}=q^{2}
\raisebox{-.26in}{
\begin{tikzpicture}
\filldraw[draw=white,fill=gray!20] (-0.2,-0) rectangle (1.2, 1);
\draw [line width =1pt](0,1)--(0.4,0.2);
\draw [line width =1pt](1,1)--(0.6,0.2);
\draw [line width =1pt](0.5,0)--(0.6,0.2);
\filldraw[draw=black,fill=black] (0.5,-0) circle (0.09);
\node at (0.3,-0.15) {$-$};
\node at (0.7,-0.15) {\small $+$};
\end{tikzpicture}
} + q^{-\frac{1}{2}}
\raisebox{-.16in}{
\begin{tikzpicture}
\filldraw[draw=white,fill=gray!20] (-0.2,-0) rectangle (1.2, 1);
\draw [line width =1pt](0,1)--(0.4,0.2);
\draw [line width =1pt](1,1)--(0.6,0.2);
\draw [line width =1pt](0.38,0.2)--(0.62,0.2);
\filldraw[draw=black,fill=black] (0.5,-0) circle (0.09);
\end{tikzpicture}
}
\end{equation}
\def \cN {\mathcal{N}}
where each black dot represents an oriented interval in $\cN$ with the orientation pointing towards readers, each gray square represents a projection of an embedded cube in $M$, the black lines are parts of stated $\MN$-tangles, and in each equation the parts of $\MN$-tangles outside the gray squares are identical.  Please refer to \cite{bloomquist2020chebyshev, le2018triangular} for the detailed explanation.


If $\cN=\emptyset$, there are only equations \eqref{cross} and \eqref{unknot} involved. In this case, the stated skein module is the (Kauffman bracket) skein module \cite{przytycki2006skein,turaev1988conway}.

Let $(M_1,\mathcal{N}_1)$ and $(M_2,\mathcal{N}_2)$ be two marked three manifolds. An embedding $f:M_1\rightarrow M_2$ is called an embedding from 
 $(M_1,\mathcal{N}_1)$ to $(M_2,\mathcal{N}_2)$ if $f(\mathcal{N}_1)\subset\mathcal{N}_2$ and $f$ preserves the orientations between $\mathcal{N}_1$ and $\mathcal{N}_2$. Clearly $f$ induces a linear map $f_{*}:\S(M_1,\mathcal{N}_1)\rightarrow \S(M_2,\mathcal{N}_2)$. If there exist embedding $g_1: (M_1,\mathcal{N}_1)\rightarrow (M_2,\mathcal{N}_2)$ and embedding $g_2:(M_2,\mathcal{N}_2)\rightarrow (M_1,\mathcal{N}_1)$ such that 
$g_2\circ g_1 = Id_{(M_1,\mathcal{N}_1)}$ and $g_1\circ g_2 = Id_{(M_2,\mathcal{N}_2)}$, we say $(M_1,\mathcal{N}_1)$ and $(M_2,\mathcal{N}_2)$ are isomorphic to each other.

A {\bf marked surface} is a pair $(\Sigma,\mathcal{P})$, where 
$\Sigma$ is a compact surface and $\mathcal{P}$ is a set of finite points in $\partial \Sigma$, called {\bf marked points}.

The {\bf bigon} $\mathcal{B}$ is  the 2-dimensional disk with two marked points on the boundary. The {\bf marked triangle} $\mathfrak T$ is the 2-dimensional disk with three marked points on the boundary.


For a marked surface $(\Sigma,\mathcal{P})$, we set $M=\Sigma\times [0,1]$, and set $\cN=\mathcal{P}\times (0,1)$ where the orientation of every component of $\cN$ is given by the positive direction of $(0,1)$.
Then $\MN$ is a marked three manifold. We call $\MN$ the thickening of $(\Sigma,\mathcal{P})$, and define
$\S(\Sigma,\mathcal{P})=\S\MN$.
Then $\S(\Sigma,\mathcal{P})$ has an algebra structure given by staking stated  tangles, that is, for any two stated  tangles $\alpha,\beta$, the product $\alpha\beta$ is defined to be staking $\alpha$ above $\beta$. We call $\S(\Sigma,\mathcal{P})$ the stated skein algebra of $(\Sigma,\mathcal{P})$. 
When $\mathcal{P}=\emptyset$, the algebra $\S(\Sigma,\mathcal{P})$ is the (Kauffman bracket) skein algebra.

Let $(\Sigma_1,\mathcal{P}_1)$ and $(\Sigma_2,\mathcal{P}_2)$ be two marked surfaces. An embedding $f:\Sigma_1\rightarrow \Sigma_2$ is called an embedding from $(\Sigma_1,\mathcal{P}_1)$ to $(\Sigma_2,\mathcal{P}_2)$ if $f(\mathcal{P}_{1})\subset \mathcal{P}_{2}$. Clearly $f$ induces an algebra homomorphism $f_*:\S(\Sigma_1,\mathcal{P}_1)\rightarrow \S(\Sigma_2,\mathcal{P}_2)$. Similarly as the marked three manifold, we define two marked surfaces are isomorphic to each other, if there exist two embeddings respectively from $(\Sigma_1,\mathcal{P}_1)$ to
$(\Sigma_2,\mathcal{P}_2)$ and from $(\Sigma_2,\mathcal{P}_2)$ to $(\Sigma_1,\mathcal{P}_1)$ such that they are inverse to each other.

From now on, we will always assume $\q$ is a root of unity of odd order $N$.


%

\section{Representation-reduced stated skein modules and algebras}\label{sec3}

In order to define the Frobenius map, first we have to introduce Chebyshev polynomials, which are defined by the following 
recurrenc relation:
\begin{equation}\label{cheby}
	T_k(x) = xT_{k-1}(x) - T_{k-2}(x).
\end{equation}
We set $T_0(x) = 2, T_1(x) = x$. Using the recurrenc relation \eqref{cheby}, we get a sequence of polynomials
$\{T_n(x)\}_{n\in\mathbb{N}}$, which are called
Chebyshev polynomials of the first kind (here we use $\mathbb{N}$ to denote the set of non-negative integers).

\subsection{Frobenius map}\label{sub3.1}
Let $(M,\mathcal{N})$ be a marked three manifold, and $\alpha$ be a framed knot or stated framed arc in
$\MN$. For any $n\in \bN$, we use $\alpha^{(n)}$ to denote a new stated $\MN$-tangle obtained   by threading $\alpha$ to $n$ parallel copies along the framing direction.
For a polynomial $Q(x)=\sum_{0\leq t\leq n} k_t x^t$,  define
$$\alpha^{[Q]} = \sum_{0\leq t\leq n} k_t \alpha^{(t)}\in\SMQ.$$

\def \cF {\mathcal{F}}

There is a linear map 
$\cF:\sS_{1}\MN\rightarrow \SMQ,$
called the Frobenius map \cite{bloomquist2020chebyshev,bonahon2016representations}.
Let $\alpha$ be any stated $(M,\mathcal{N})$-tangle, suppose 
$\alpha = K_1\cup\cdots\cup K_m \cup C_1\cup\cdots\cup C_n$
where $K_i,1\leq i\leq m,$ are framed knots and $C_j,1\leq j\leq n,$ are stated framed arcs.
Then 
\begin{equation}\label{Frobe}
	\cF(\alpha) =K_1^{[T_N]}\cup\cdots\cup K_m^{[T_N]}\cup C_1^{(N)}\cup\cdots\cup C_n^{(N)}.
\end{equation}

\def \Im {\text{Im}}

\def \Zq {Z_{q^{1/2}}(\Sigma)}

Let $\alpha$ be any stated $(M,\mathcal{N})$-tangle, then $\cF(\alpha)$ is transparent \cite{bloomquist2020chebyshev,bonahon2016representations,wang2023stated}, in a sense that we have the following two relations in $\SMQ$:
\begin{equation}\label{well_de}
	\raisebox{-.20in}{
		\begin{tikzpicture}
			\filldraw[draw=white,fill=gray!20] (-0,-0.2) rectangle (1, 1.2);
			\draw [line width =1pt](0.6,0.6)--(1,1);
			\draw [line width =1pt](0.6,0.4)--(1,0);
			\draw[line width =1pt] (0,0)--(0.4,0.4);
			\draw[line width =1pt] (0,1)--(0.4,0.6);
			\draw[line width =1pt] (0.6,0.6)--(0.4,0.4);
			\node [right] at (1,1) {$\cF(\alpha)$};
		\end{tikzpicture}
	}=
	\raisebox{-.20in}{
		\begin{tikzpicture}
			\filldraw[draw=white,fill=gray!20] (-0,-0.2) rectangle (1, 1.2);
			\draw [line width =1pt](0.6,0.6)--(1,1);
			\draw [line width =1pt](0.6,0.4)--(1,0);
			\draw[line width =1pt] (0,0)--(0.4,0.4);
			\draw[line width =1pt] (0,1)--(0.4,0.6);
			\draw[line width =1pt] (0.4,0.6)--(0.6,0.4);
			\node [right] at (1,1) {$\cF(\alpha)$};
		\end{tikzpicture}
	},
	\raisebox{-.35in}{
		\begin{tikzpicture}
			\filldraw[draw=white,fill=gray!20] (-0.2,-0) rectangle (1.2, 1);
			\draw [line width =1pt](0,1)--(0.4,0.2);
			\draw [line width =1pt](1,1)--(0.6,0.2);
			\draw [line width =1pt](0.5,0)--(0.6,0.2);
			\filldraw[draw=black,fill=black] (0.5,-0) circle (0.09);
			\node at (0.3,-0.15) {$i$};
			\node at (0.7,-0.15) {\small $j$};
			\node [above] at (1,1) {$\cF(\alpha)$};
		\end{tikzpicture}
	}=
	\raisebox{-.35in}{
		\begin{tikzpicture}
			\filldraw[draw=white,fill=gray!20] (-0.2,-0) rectangle (1.2, 1);
			\draw [line width =1pt](0,1)--(0.4,0.2);
			\draw [line width =1pt](1,1)--(0.6,0.2);
			\draw [line width =1pt](0.5,0)--(0.4,0.2);
			\filldraw[draw=black,fill=black] (0.5,-0) circle (0.09);
			\node at (0.3,-0.15) {$i$};
			\node at (0.7,-0.15) {\small $j$};
			\node [above] at (1,1) {$\cF(\alpha)$};
		\end{tikzpicture}
	} 
\end{equation}
where $i,j= -,+$. 

For any marked three manifold $\MN$, the stated skein module $\sS_1\MN$ has a commutative algebra structure, defined by taking the disjoint union of $\MN$-stated tangles. When $\q$ is a root of unity of odd order,
 there is an action of $\sS_1\MN$ on $\S\MN$, defined by, for any two disjoint stated tangles $\alpha,\beta$, 
$\alpha\cdot\beta = \cF(\alpha)\cup \beta$.

Let $(\Sigma,\mathcal{P})$ be a marked surface, then $\cF:\sS_1(\Sigma,\mathcal{P})\rightarrow \S(\Sigma,\mathcal{P})$ is injective and 
$\Im \cF$ is contained in the center of $\S(\Sigma,\mathcal{P})$ \cite{bloomquist2020chebyshev,bonahon2016representations}.
For any $x\in \sS_1(\Sigma,\mathcal{P})$ and $y\in \S(\Sigma,\mathcal{P})$, the action $x\cdot y$ will be
$\cF(x)y$.

\def \otau {\overline{\tau}}
\def \Trq {\mathcal{T}_{q^{1/2},\tau}(\Sigma)}
\def \Trp {\mathcal{T}_{q^{1/2},\tau}^{+}(\Sigma)}
\def \Tro {\mathcal{T}_{1,\tau}(\Sigma)}
\def \Tr {\text{Tr}}
\def \mno {\sS_{1}(M,\mathcal{N})}
\def \max {\text{MaxSpec}(\mno)}

\def \Max {\text{MaxSpec}(\sSo)}
\subsection{Representation-reduced stated skein modules and algebras}
Let $(M,\mathcal{N})$ be any marked three manifold. Recall that we use $\text{MaxSpec}(\mno)$
to denote the set of all maximal ideals of $\mno$. 
Note that we can also regard $\text{MaxSpec}(\mno)$ as the set of algebra homomorphisms from $\mno$ to $\mathbb C$.


For any element $\rho\in\max$, the commutative algebra
$\mno$ has an action on $\bC$ induced by $\rho,$ that is, for $x\in\mno,k\in \bC$, $x\cdot k = \rho(x)k$. Recall that
$\mno$ also has an action on $\S\MN$.
Then we define $$\S\MN_{\rho} = \S\MN\otimes_{\mno}^{\rho}\bC\simeq \S\MN/\text{Ker}(\rho)\cdot\S\MN,$$ where the superscript for $\otimes_{\sSo}^{\rho}$ is to imply the action of 
$\mno$ on $\bC$ is induced by $\rho$. We call $\S\MN_{\rho}$   the representation-reduced stated skein module of $\MN$ related to $\rho$, or just, the representation-reduced stated skein module when $\MN$ and $\rho$ are clear.

\begin{remark}
Frohman-Kania-Bartoszynska-L{\^e} give the parallel definition for the skein case (that is the non-stated case) as the representation-reduced stated skein module \cite{frohman2023sliced}. They call this quotient module as the "character reduced skein module". The reason why they use "character" is because $\max$ is a character variety when $\cN=\emptyset$ (that is for the non-stated case). For the stated case, we call this quotient module as "representation-reduced stated skein module"  bacause $\max$ is a representation variety when every component of $M$ contains at least one marking (that is the stated case).
\end{remark}

When the marked three manifold $\MN$ is the thickening of a marked surface $(\Sigma,\mathcal{P})$ and $\rho\in\text{MaxSpec}(\sS_1(\Sigma,\mathcal{P}))$, we use $\S(\Sigma,\mathcal{P})_{\rho}$ to denote $\S\MN_{\rho}$. Then 
 $\S(\Sigma,\mathcal{P})_{\rho}$ is a quotient algebra of $\S(\Sigma,\mathcal{P})$.
 
 \def \cN {\mathcal{N}}
 
 \begin{definition}
 	Let $(M_1,\cN_1)$ and $(M_2,\cN_2)$ be two marked three manifolds.
 Recall that $q^{\frac{1}{2}}$ is a root of unity of odd order $N$. 
 Suppose for each such a $q^{\frac{1}{2}}$, there exists a linear map
 $\varphi_{q^{1/2}}: \S(M_1,\cN_1)\rightarrow \S(M_2,\cN_2)$. We can omit the subscript for $\varphi_{q^{1/2}}$ when there is no confusion. We  say
 $\varphi: \S(M_1,\cN_1)\rightarrow \S(M_2,\cN_2)$ behaves well with respect to the Frobenius map if it satisfies the following conditions:
 
 (1) If ${q^{1/2}}=1$, then $\varphi: \sS_1(M_1,\cN_1)\rightarrow \sS_1(M_2,\cN_2)$ is an algebra homomorphism. 
 
 (2) For each  $q^{\frac{1}{2}}$, we have the following commutative diagram:
 \begin{equation*}
 	\begin{tikzcd}
 		\sS_1(M_1,\cN_1)  \arrow[r, "\varphi_1"]
 		\arrow[d, "\cF"]  
 		&  \sS_1 (M_2,\mathcal{N}_2) \arrow[d, "\cF"] \\
 		\S(M_1,\cN_1)  \arrow[r, "\varphi_{q^{1/2}}"] 
 		&  \S(M_2,\cN_2)\\
 	\end{tikzcd}.
 \end{equation*}
 
 (3) For each $q^{\frac{1}{2}}$, we have $\varphi_{q^{1/2}}: \S(M_1,\cN_1)\rightarrow \S(M_2,\cN_2)$ preserves module structures, in a sense that,
 $\varphi_{q^{1/2}}(\alpha\cdot \beta) = \varphi_1(\alpha)\cdot \varphi_{q^{1/2}}(\beta)$ for any $\alpha\in\sS_1(M_1,\cN_1),\beta\in\S(M_1,\cN_1)$.
 
 \end{definition}
 
  \def \cN {\mathcal{N}}

 \begin{lemma}\label{con}
 	Suppose $\varphi: \S(M_1,\cN_1)\rightarrow \S(M_2,\cN_2)$ behaves well with respect to the Frobenius map. We know the algebra homomorphism  $\varphi_1: \sS_1(M_1,\cN_1)\rightarrow \sS_1(M_2,\cN_2)$ induces a map: $\varphi^{*}:\text{MaxSpec}(\sS_1(M_2,\cN_2))\rightarrow \text{MaxSpec}(\sS_1(M_1,\cN_1))$.
 Then,	 for each $q^{\frac{1}{2}}$ and $\rho\in \text{MaxSpec}(\sS_1(M_2,\cN_2))$, the linear map $$\varphi_{q^{1/2}}:\S(M_1,\cN_1)\rightarrow \S(M_2,\cN_2)$$ induces a linear map $$\varphi_{\rho}: \S(M_1,\cN_1)_{\varphi^{*}(\rho)}\rightarrow \S(M_2,\cN_2)_{\rho}$$ defined by sending $x\otimes k$ to $\varphi(x)\otimes k$, where $x\in\S(M_1,\cN_1)$ and $k\in\mathbb{C}$.
 \end{lemma}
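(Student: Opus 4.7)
The plan is to construct $\varphi_\rho$ via the universal property of the tensor product and then verify well-definedness using condition (3) of the hypothesis. Concretely, I would first consider the $\bC$-bilinear map
\begin{equation*}
\Phi: \S(M_1,\cN_1)\times \bC \longrightarrow \S(M_2,\cN_2)_\rho, \qquad (x,k)\longmapsto \varphi_{q^{1/2}}(x)\otimes k,
\end{equation*}
which is manifestly bilinear since $\varphi_{q^{1/2}}$ is $\bC$-linear. This factors through $\S(M_1,\cN_1)\otimes_{\bC}\bC$ to give a linear map, and it remains to check it descends to the quotient $\S(M_1,\cN_1)\otimes_{\sS_1(M_1,\cN_1)}^{\varphi^{*}(\rho)}\bC$.

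The key point to verify is the balancing relation: for every $\alpha\in \sS_1(M_1,\cN_1)$, $x\in\S(M_1,\cN_1)$, and $k\in\bC$, one must show
\begin{equation*}
\varphi_{q^{1/2}}(\alpha\cdot x)\otimes k \;=\; \varphi_{q^{1/2}}(x)\otimes \bigl(\varphi^{*}(\rho)(\alpha)\,k\bigr) \quad \text{in }\S(M_2,\cN_2)_\rho.
\end{equation*}
Condition (3) rewrites the left-hand side as $\varphi_1(\alpha)\cdot \varphi_{q^{1/2}}(x)\otimes k$. Using the balancing relation of the tensor product that defines $\S(M_2,\cN_2)_\rho$, this equals $\varphi_{q^{1/2}}(x)\otimes \rho(\varphi_1(\alpha))\,k$. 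Since $\varphi^{*}$ is defined by pullback, $\varphi^{*}(\rho) = \rho\circ \varphi_1$, so $\rho(\varphi_1(\alpha)) = \varphi^{*}(\rho)(\alpha)$ and the two sides agree. This shows $\varphi_\rho$ is well defined, and linearity is automatic from the construction; on elementary tensors it has the stated formula $x\otimes k\mapsto \varphi(x)\otimes k$.

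There is no real obstacle in this lemma: condition (1) is exactly what makes $\varphi^{*}$ a well-defined map between maximal spectra (so that the target $\varphi^{*}(\rho)$ of the induced source module makes sense), and condition (3) is precisely the compatibility needed for the balancing relation. Condition (2) is not directly invoked here, but it is the reason why (3) is natural, since the $\sS_1$-actions on both sides are built from the Frobenius map. Thus the proof reduces to the single computation above, carried out at the level of elementary tensors and then extended by linearity.
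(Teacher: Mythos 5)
Your proof is correct and takes essentially the same route as the paper: both define the bilinear map $(x,k)\mapsto\varphi_{q^{1/2}}(x)\otimes k$ on $\S(M_1,\cN_1)\times\bC$ and verify the balancing relation over $\sS_1(M_1,\cN_1)$ using condition (3) together with $\varphi^{*}(\rho)=\rho\circ\varphi_1$. Your side remark that condition (1) is what makes $\varphi^{*}$ well defined and condition (2) is not directly invoked is also accurate.
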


 \begin{proof}

 	For each  $q^{\frac{1}{2}}$ and $\rho\in \text{MaxSpec}(\sS_1(M_2,\cN_2))$, define 
 	$$\overline{\varphi_{\rho}}:\S(M_1,\cN_1)\times \mathbb{C}\rightarrow \S(M_2,\cN_2)_{\rho},\; (x,k)\rightarrow \varphi_{q^{1/2}}(x)\otimes k.$$ Clearly $\overline{\varphi_{\rho}}$ is bilinear. For any $x\in\S(M_1,\cN_1),\, y\in \sS_1(M_1,\cN_1)$ and $k\in\mathbb{C}$, we have 
 	\begin{align*}
 	&\overline{\varphi_{\rho}}(x\cdot y, k) = \varphi_{q^{1/2}}(x\cdot y) \otimes k = \varphi_{q^{1/2}}(x)\cdot \varphi_1( y)\otimes k\\
 	= &\varphi_{q^{1/2}}(x)\otimes \varphi_1( y)\cdot k
 	=\varphi_{q^{1/2}}(x)\otimes \rho(\varphi_1( y)) k
 	= \overline{\varphi_{\rho}}(x , y\cdot k).
 	\end{align*}
 	Thus $\overline{\varphi_{\rho}}$ induces the linear map $\varphi_{\rho}: \S(M_1,\cN_1)_{\varphi^{*}(\rho)}\rightarrow \S(M_2,\cN_2)_{\rho}$.
 \end{proof}

 \subsection{Functoriality of the representation-reduced stated skein modules}

In this subsection, we will show the embedding between two marked three manifolds induces a linear map between the corresponding representation-reduced stated skein modules, and the  representation-reduced stated skein module of two disjoint marked three manifolds is isomorphic to the tensor product of their  representation-reduced stated skein modules.

 \begin{lemma}\label{embedding}
 	Let $f:(M_1,\cN_1)\rightarrow (M_2,\cN_2)$ be an embedding between two marked three manifolds. Then we have 
 	$f_*: \S(M_1,\cN_1)\rightarrow \S(M_2,\cN_2)$ behaves well with respect the Frobenius map.
 	 
 	
 	
%

 \end{lemma}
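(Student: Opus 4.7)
The plan is to verify the three defining conditions of ``behaving well with respect to the Frobenius map'' one by one, exploiting the purely local/diagrammatic nature of the operations involved.

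First I would check condition (1), that $f_*:\sS_1(M_1,\cN_1)\to\sS_1(M_2,\cN_2)$ is an algebra homomorphism. Recall the commutative product on $\sS_1$ is disjoint union of stated tangles. Given any two stated $(M_1,\cN_1)$-tangles $\alpha,\beta$, I would first isotope them so that they are disjoint in $M_1$ (this is possible because $\dim M_1=3$ and the boundaries $\partial\alpha,\partial\beta$ lie in the $1$-dimensional set $\cN_1$, so a generic small perturbation achieves disjointness). Since $f$ is an embedding, $f(\alpha)$ and $f(\beta)$ remain disjoint, giving $f_*(\alpha\cdot\beta)=f_*(\alpha\cup\beta)=f(\alpha)\cup f(\beta)=f_*(\alpha)\cdot f_*(\beta)$. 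Extending by linearity yields (1).

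For condition (2), I would verify the commutative square on the basis of stated tangles. Write a generic stated $(M_1,\cN_1)$-tangle as $\alpha=K_1\cup\cdots\cup K_m\cup C_1\cup\cdots\cup C_n$, with $K_i$ framed knots and $C_j$ stated framed arcs. The recipe \eqref{Frobe} for $\cF(\alpha)$ is defined locally by threading: $K_i$ is replaced by a formal combination $K_i^{[T_N]}$ of parallel copies along the framing, and $C_j$ by its $N$-th parallel $C_j^{(N)}$. Since $f$ is an orientation- and marking-preserving embedding, it carries the framing of $\alpha$ to the framing of $f(\alpha)$ and therefore commutes with taking parallel copies: $f_*(K_i^{(t)})=f_*(K_i)^{(t)}$ for every $t$, hence $f_*(K_i^{[T_N]})=f_*(K_i)^{[T_N]}$ by linearity, and similarly $f_*(C_j^{(N)})=f_*(C_j)^{(N)}$. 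Because $f_*$ is also compatible with disjoint union (part of the definition of $f_*$), assembling these pieces yields $f_*(\cF(\alpha))=\cF(f_*(\alpha))$, which is (2).

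Condition (3) is then essentially formal. For $\alpha\in\sS_1(M_1,\cN_1)$ and $\beta\in\S(M_1,\cN_1)$, the action is defined by $\alpha\cdot\beta=\cF(\alpha)\cup\beta$ after isotoping to disjointness in $M_1$. Applying $f_*$ and using (2) and the disjoint-union compatibility gives
\begin{equation*}
f_*(\alpha\cdot\beta)=f_*(\cF(\alpha)\cup\beta)=f_*(\cF(\alpha))\cup f_*(\beta)=\cF(f_*(\alpha))\cup f_*(\beta)=f_*(\alpha)\cdot f_*(\beta),
\end{equation*}
which is precisely (3) with $\varphi_1=\varphi_{q^{1/2}}=f_*$.

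There is not really a hard step here; the statement is a functoriality check. The only mild subtlety is justifying that every operation used (disjoint union, taking parallel copies along a framing, and applying $T_N$) is truly \emph{local} in the ambient $3$-manifold and therefore transported unchanged by the embedding $f$. Once that is articulated carefully, all three conditions reduce to straightforward manipulations, so I would spend most of the write-up making the ``locality preserved by embedding'' point precise and then let (1)--(3) follow by direct substitution.
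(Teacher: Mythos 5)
Your proof is correct and follows essentially the same route as the paper's: condition (1) is dismissed as a direct consequence of $f_*$ preserving disjoint unions, condition (2) rests on the observation that an embedding commutes with taking framed parallels (hence with threading by any polynomial), and condition (3) is the same chain of equalities using (2) together with compatibility with disjoint union. The only difference is that you spell out the ``locality preserved by embedding'' point more explicitly, which the paper leaves implicit.
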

 \begin{proof}
 Condition (1)  is trivial.
 
 Condition (2): For any framed knot or stated framed arc $\alpha$ in $(M_1,\mathcal{N}_1)$ and any non-negative integer $k$, obviously, we have $f_*(\alpha^{(k)}) = (f_*(\alpha))^{(k)}$. Let $P(x) = \sum_{1\leq k\leq n} \lambda_k x^k \in\mathbb{C}[x]$, then we have 
 $$f_*(\alpha^{[P]}) =\sum_{1\leq k\leq n} \lambda_k f_*( \alpha^{(k)} )
  = \sum_{1\leq k\leq n} \lambda_k (f_*(\alpha))^{(k)} = f_*(\alpha)^{[P]}.$$
  This shows $f_*$ and $\cF$ commute with each other from the definition of $\cF$.
 
 Condition (3): For any disjoint stated $(M_1,\mathcal{N}_1)$-tangles
 $\alpha,\beta$, we have $$f_*(\alpha\cdot \beta) = f_*(\cF(\alpha)\cup\beta) = f_*(\cF(\alpha))\cup f_*(\beta) =\cF(f_*(\alpha))\cup f_*(\beta) = f_*(\alpha)\cdot f_*(\beta)$$
 (here we regard $\alpha$ as an element in $\sS_1(M_1,\mathcal{N}_1)$ and regard $\beta$ as an element in $\S(M_1,\mathcal{N}_1)$).
 

 \end{proof}
 
For any $\rho\in \text{MaxSpec}(\sS_1(M_2,\cN_2))$, Lemma \ref{con} implies that
 $f_*: \S(M_1,\cN_1)\rightarrow \S(M_2,\cN_2)$ induces a linear map
 $f_{\rho} : \S(M_1,\cN_1)_{f^{*}(\rho)}\rightarrow \S(M_2,\cN_2)_{\rho}$.
 
 Let $(M_1,\cN_1)$ and $(M_2,\cN_2)$ be two marked three manifolds. Then for each $q^{\frac{1}{2}}$, there exists a linear isomorphism 
 $$\kappa:\S(M_1,\cN_1)\otimes \S(M_2,\cN_2)\rightarrow \S((M_1,\cN_1)\cup(M_2,\cN_2)),$$ defined by 
 $\kappa(\alpha\otimes \beta) = \alpha \cup \beta$, where 
 $\alpha$ is a stated $(M_1,\mathcal{N}_1)$-tangle and $\beta$ is a stated
 $(M_2,\mathcal{N}_2)$-tangle.
 \begin{lemma}
 	We have $$\kappa:\S(M_1,\cN_1)\otimes \S(M_2,\cN_2)\rightarrow \S((M_1,\cN_1)\cup(M_2,\cN_2))$$ behaves well with respect to the Frobenius map (the Frobenius map for $\S(M_1,\cN_1)\otimes \S(M_2,\cN_2)$ is defined by sending $x\otimes y$ to $\cF(x)\otimes \cF(y)$, where $x\in \sS_1(M_1,\cN_1)$ and $y\in \sS_1(M_2,\cN_2)$). 
 \end{lemma}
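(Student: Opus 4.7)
The plan is to verify the three defining conditions of ``behaving well with respect to the Frobenius map'' in turn. The central observation throughout is that, since $(M_1,\cN_1)$ and $(M_2,\cN_2)$ are disjoint, any stated tangle in their union splits uniquely as a disjoint union of a stated $(M_1,\cN_1)$-tangle and a stated $(M_2,\cN_2)$-tangle, and disjoint tangles commute in any stated skein module (so in particular stacking is just disjoint union when we are in the thickening context, but more importantly, no crossings or height-exchange relations ever mix the two sides).

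First I would check Condition (1). At $q^{1/2}=1$, both algebras in question have products given by disjoint union. For basis elements $\alpha_1,\alpha_2 \in \sS_1(M_1,\cN_1)$ and $\beta_1,\beta_2 \in \sS_1(M_2,\cN_2)$, one has
\begin{equation*}
\kappa_1\bigl((\alpha_1\otimes\beta_1)(\alpha_2\otimes\beta_2)\bigr) = (\alpha_1\cup\alpha_2)\cup(\beta_1\cup\beta_2) = (\alpha_1\cup\beta_1)\cup(\alpha_2\cup\beta_2) = \kappa_1(\alpha_1\otimes\beta_1)\cup\kappa_1(\alpha_2\otimes\beta_2),
\end{equation*}
so $\kappa_1$ is an algebra homomorphism.

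Next I would check Condition (2). For any stated $(M_1,\cN_1)$-tangle $x$ with connected components $K_1,\dots,K_m,C_1,\dots,C_n$ (knots and stated arcs) and any stated $(M_2,\cN_2)$-tangle $y$ with components $K'_1,\dots,K'_{m'},C'_1,\dots,C'_{n'}$, the disjoint union $x\cup y$ has precisely these components, all still disjoint. Applying the definition \eqref{Frobe} of $\cF$ in $\S((M_1,\cN_1)\cup(M_2,\cN_2))$, one obtains $\cF(x\cup y)=\cF(x)\cup\cF(y)$, which is exactly $\kappa(\cF(x)\otimes\cF(y))$. Extending bilinearly gives the required commutative diagram.

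Finally, Condition (3) reduces to the calculation, for $\alpha\in\sS_1(M_1,\cN_1)$, $\beta\in\sS_1(M_2,\cN_2)$, $x\in\S(M_1,\cN_1)$, $y\in\S(M_2,\cN_2)$,
\begin{align*}
\kappa\bigl((\alpha\otimes\beta)\cdot(x\otimes y)\bigr) &= \kappa\bigl((\alpha\cdot x)\otimes(\beta\cdot y)\bigr) = (\cF(\alpha)\cup x)\cup(\cF(\beta)\cup y) \\
&= \bigl(\cF(\alpha)\cup\cF(\beta)\bigr)\cup(x\cup y) = \cF(\alpha\cup\beta)\cup\kappa(x\otimes y) = \kappa_1(\alpha\otimes\beta)\cdot\kappa(x\otimes y),
\end{align*}
using Condition (2) in the second-to-last step. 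No step is a real obstacle; the only thing to be careful about is keeping track of which map is the $q^{1/2}=1$ version $\kappa_1$ versus the general $\kappa=\kappa_{q^{1/2}}$, and invoking the fact that disjointness of the underlying manifolds makes the Frobenius map and $\kappa$ commute trivially on each connected component.
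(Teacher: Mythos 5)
Your proof is correct and matches the paper's intent: the paper dismisses this as trivial, and you have simply spelled out why, checking each of the three conditions by decomposing everything along the disjoint union and observing that $\cF$ and $\kappa$ both act component-by-component.
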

 \begin{proof}
 	The proof is trivial.
 \end{proof}
 

It is easy to show that $\text{MaxSpec}(\sS_1(M_1,\cN_1)\otimes \sS_1(M_2,\cN_2)) = \text{MaxSpec}(\sS_1(M_1,\cN_1))\times \text{MaxSpec}( \sS_1(M_2,\cN_2))$, that is, any $\rho\in \text{MaxSpec}(\sS_1(M_1,\cN_1)\otimes \sS_1(M_2,\cN_2))$ is a pair $(\rho_1,\rho_2)$, where $\rho_i\in \text{MaxSpec}(\sS_1(M_i,\cN_i))$ for 
 $i=1,2$ (here we regard $(\rho_1,\rho_2)$ as an algebra homomorphism from $\sS_1(M_1,\cN_1)\otimes \sS_1(M_2,\cN_2)$ to $\mathbb{C}$, define by $(\rho_1,\rho_2)(x_1\otimes x_2) = \rho_1(x_1)\rho_2(x_2)$, where $x_i\in \sS_1(M_i,\cN_i)$ for $i=1,2$).
 
 For any $\rho\in \text{MaxSpec}(\sS_1((M_1,\cN_1)\cup(M_2,\cN_2)))$ and $i=1,2$, we use $\rho_i$ to denote the composition 
 $\sS_1(M_i,\mathcal{N}_i)\rightarrow \sS_1((M_1,\cN_1)\cup(M_2,\cN_2))\rightarrow \mathbb{C}$, where the first map is induced by the embedding from $(M_i,\mathcal{N}_i)$ to $(M_1,\cN_1)\cup(M_2,\cN_2)$ and the second map is $\rho$.
 Then $\kappa^*(\rho) = (\rho_1,\rho_2)$.
 
  For any $\rho\in \text{MaxSpec}(\sS_1((M_1,\cN_1)\cup(M_2,\cN_2)))$ with $\kappa^*(\rho) = (\rho_1,\rho_2)$ and any $q^{\frac{1}{2}}$, the linear isomorphism, $$\kappa:\S(M_1,\cN_1)\otimes \S(M_2,\cN_2)\rightarrow \S((M_1,\cN_1)\cup(M_2,\cN_2))$$ induces the following linear isomorphism
  \begin{equation}\label{union}
 \kappa_{\rho}:\S(M_1,\cN_1)_{\rho_1}\otimes \S(M_2,\cN_2)_{\rho_2}\rightarrow \S((M_1,\cN_1)\cup(M_2,\cN_2))_{\rho},
 \end{equation}
 defined by $\kappa_{\rho}((x_1\otimes k_1)\otimes (x_2\otimes k_2))
  = \kappa(x_1\otimes x_2)\otimes k_1k_2,$ where $k_1,k_2\in\mathbb{C}$ and 
  $x_i\in \S(M_i,\cN_i)$ for $i=1,2$.
 
 \section{The splitting map for representation-reduced stated skein modules}\label{666}
The existence of the splitting map  is a very important property for stated skein modules. Thang L{\^e} constructed the quantum trace map using the splitting map \cite{le2018triangular} (the quantum trace map was originally constructed in \cite{bonahon2011quantum}). The splitting map also gives the stated skein algebra of the bigon a comultiplication structure (it actually has a Hopf algebra structure) \cite{costantino2022stated1}. 

In this section, we prove the splitting map for stated skein modules induces the splitting map for the representation-reduced stated skein modules. We also show the splitting map for the representation-reduced stated skein modules is injective if there exists at  least one component of $\mathcal{N}$ such that this component and the boundary of the splitting disk belong  to the same component of $\partial M$. 

\subsection{The splitting map}\label{a}
Let $\MN$ be a marked three manifold, and let $(D,u)$ be a pair, where 
$D$, called the splitting disk, is a properly embedded disk in $M$ and $u$ is an embedded oriented open interval in $D$. Suppose $U(D)$ is an open regular neighborhood of $D$ such that $U(D)$ is isomorphic to $D\times (0,1)$ and $\partial U(D) = \partial D\times (0,1)$.  Let $M^{'} = M\setminus U(D)$. Then there exists a projection $\text{pr}:M^{'}\rightarrow M$. Suppose $\text{pr}^{-1}(u) =u_1\cup u_2$, where both $u_1$ and $u_2$ are oriented open intervals in $\partial M^{'}$. Define $\text{Cut}_{(D,u)}\MN = (M^{'},\mathcal{N}^{'})$, where $\mathcal N^{'} = \mathcal N\cup u_1\cup u_2$.

For any stated $\MN$-tangle $\alpha$, we isotope $\alpha$ such that $\alpha\cap D =\alpha\cap u$ and at each point in $\alpha\cap u$ the framing of $\alpha$ is given by the the velocity vector of $u$. Let $s$ be a map from $\alpha\cap u$ to $\{-,+\}$.
We define a stated $(M^{'},\mathcal{N}^{'})$-tangle $\alpha(s)$ in the following way:
 the  $(M^{'},\mathcal{N}^{'})$-tangle is $\text{pr}^{-1}(\alpha)$; the states  for $\text{pr}^{-1}(\alpha)\cap \mathcal{N}$ are inherited from $\alpha$; for each point  $u\in\alpha\cap u$, we state the two endpoints $\text{pr}^{-1}(u)$ with $s(u)$. Then there exists a linear map
 $\Theta_{(D,u)}:\S\MN\rightarrow \S(M^{'},\mathcal{N}^{'})$ such that 
 $\Theta_{(D,u)}(\alpha)=\sum_{s:\alpha\cap u\rightarrow \{-,+\}} \alpha(s)$ \cite{bloomquist2020chebyshev}.  When there is no confusion, we can omit the subscript for $\Theta_{(D,u)}$.

 \begin{lemma}\label{splitting}
 	We have $\Theta:\S\MN\rightarrow \S(M^{'},\mathcal{N}^{'})$ behaves well with respect to the Frobenius map.
 \end{lemma}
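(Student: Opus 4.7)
The plan is to verify the three conditions in the definition of ``behaves well with respect to the Frobenius map'' one at a time. A useful preliminary is the following \emph{disjoint-union lemma}: if $\alpha,\beta$ are stated $\MN$-tangles that can be isotoped to be disjoint and each transverse to $u$, then $\Theta(\alpha\cup\beta) = \Theta(\alpha)\cup\Theta(\beta)$ for every $q^{1/2}$. This follows immediately from the definition of $\Theta$ as a state sum, because any function $(\alpha\cup\beta)\cap u \to \{-,+\}$ is uniquely a pair of functions on $\alpha\cap u$ and $\beta\cap u$, and the resulting split tangles combine by disjoint union. Condition (1) is then instant: at $q^{1/2}=1$ the multiplication in $\sS_1$ is exactly disjoint union, so the lemma gives that $\Theta_1$ is an algebra homomorphism.

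Condition (3) reduces formally to condition (2) together with the disjoint-union lemma. Indeed, for disjoint representatives $\alpha\in \sS_1\MN$ and $\beta\in \S\MN$ each transverse to $u$,
\[
\Theta(\alpha\cdot\beta) = \Theta(\cF(\alpha)\cup\beta) = \Theta(\cF(\alpha))\cup\Theta(\beta) = \cF(\Theta_1(\alpha))\cup\Theta(\beta) = \Theta_1(\alpha)\cdot\Theta(\beta),
\]
where the third equality is precisely condition (2). Hence the entire substance of the lemma lies in condition (2), namely $\Theta_{q^{1/2}}\circ\cF = \cF\circ\Theta_1$. By linearity and the disjoint-union lemma (applied to the factorization of a tangle into its components), one may assume the input is a single framed knot $K$ or a single stated framed arc $C$; the subcase where the component is disjoint from $D$ is trivial because neither operation alters it.

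The main obstacle is the case of a framed knot $K$ meeting $u$ transversely at $k\geq 1$ points (the stated arc case is strictly easier, since $\cF(C)=C^{(N)}$ involves no Chebyshev substitution). On the one hand, $\Theta(K^{[T_N]})$ is a $\bC$-linear combination of splittings of parallel push-offs $K^{(j)}$; on the other, $\cF(\Theta_1(K))=\sum_{s}\cF(K(s))$ is a sum over $2^k$ state assignments, each followed by an $N$-fold parallel copy of the resulting arcs. My plan to equate these in $\S(M^{'},\mathcal{N}^{'})$ is to localize near each transverse intersection of $K$ with $D$: inside a product neighborhood the identity reduces to a calculation in the stated skein algebra of a bigon, where the defining recurrence of the Chebyshev polynomial $T_N$ together with the transparency relations \eqref{well_de} for $\cF$ established in \cite{bloomquist2020chebyshev,bonahon2016representations} yields the equality. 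This local reduction, amounting to a Chebyshev identity in the bigon algebra, is the step where I expect most of the technical work to concentrate.
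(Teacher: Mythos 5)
Your treatment of conditions (1) and (3) agrees with the paper: condition (1) is immediate from the state-sum definition of $\Theta$, and condition (3) follows formally from the disjoint-union property of $\Theta$ together with condition (2), which is exactly the chain of equalities the paper uses. The divergence is in condition (2), and it is there that your proposal has a real gap. The paper disposes of condition (2) by citing \cite{bloomquist2020chebyshev}, where the identity $\Theta_{q^{1/2}}\circ\cF=\cF\circ\Theta_1$ is established. You instead attempt to rederive it by localizing near $K\cap D$ and reducing to a Chebyshev identity in the bigon algebra, but you never carry out this computation — your own wording (``the step where I expect most of the technical work to concentrate'') flags it as undone. That local computation \emph{is} the theorem; a proposal that stops before it has not proved condition (2).

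Two smaller points. First, the localization you describe is more delicate than it sounds: the Chebyshev substitution $K\mapsto K^{[T_N]}$ is a global operation on the knot, producing a $\bC$-linear combination of links $K^{(j)}$ that meet $D$ in $jk$ points, whereas $\cF\circ\Theta_1(K)=\sum_s \cF(K(s))$ is a sum over only $2^k$ state assignments with $N$-fold parallelization applied afterward. Matching these is not a straightforward bigon identity applied one intersection point at a time; it requires the transparency relations of $\cF$ to reorganize the terms globally. Second, the arc case is not ``strictly easier'' in a way that makes it trivial: $\Theta(C^{(N)})$ is a sum over $2^{Nk}$ state assignments while $\cF(\Theta_1(C))$ has $2^k$ summands, so the equality is still a nontrivial collapse of the state sum via the skein relations near the marking — again, precisely what the cited reference proves. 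The cleanest fix is to do what the paper does and invoke \cite{bloomquist2020chebyshev} for condition (2), after which your derivation of (3) from (2) and the disjoint-union lemma goes through verbatim.
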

 \begin{proof}
 	Condition (1) is trivial from the definition of $\Theta$.
 	
 	Condition (2) is proved in \cite{bloomquist2020chebyshev}.
 	
 	Condition (3): For any disjoint stated $\MN$-tangles $\alpha$ and $\beta$, we have  $$\Theta(\alpha\cdot\beta)=\Theta(\cF(\alpha)\cup\beta) = \Theta(\cF(\alpha))\cup\Theta(\beta) = \cF(\Theta(\alpha))\cup \Theta(\beta) = \Theta(\alpha)\cdot \Theta(\beta),$$
 	where we regard $\alpha$ as an element in $\mno$ and regard $\beta$ as an element in $\S\MN$.
 	
 \end{proof}

%
%
%


\begin{remark}
 There is a surjective algebra homomorphism $\Phi:\mno\rightarrow R_2\MN,$ where $R_2\MN$ is the coordinate ring of some algebraic set, Theorem 3.15 in \cite{wang2023stated}. We  have $\text{Ker}\,\Phi$ consists of all nilpotents of $\mno$, Theorem 5.21 in \cite{wang2023stated}. Then $\Phi$ induces a bijection $\Phi^{*}: \text{MaxSpec}(R_2\MN)\rightarrow \text{MaxSpec}(\mno)$.
Proposition 3.18 in \cite{wang2023stated} implies there is a surjective map $$\nu^{*}:\text{MaxSpec}(R_2(M^{'},\mathcal{N}^{'}))\rightarrow \text{MaxSpec}(R_2\MN)$$ induced by an algebra homomorphism $\nu:R_2\MN\rightarrow R_2(M^{'},\mathcal{N}^{'})$. Theorem 3.19 in \cite{wang2023stated} shows $\Phi\circ\Theta = \nu\circ \Phi$.
Thus $\Theta^{*}\circ\Phi^{*} = \Phi^{*}\circ \nu^{*}$. Then we have $\Theta^{*}$ is surjective since $\Phi^{*}$ is a bijection and $\nu^{*}$ is surjective. 
\end{remark}

 For any $\q$ and any $\rho\in \text{MaxSpec}(\sS_1(M^{'},\mathcal{N}^{'}))$, Lemma \ref{con} implies that $\Theta:\S\MN\rightarrow \sS_{q^{1/2}}(M^{'},\mathcal{N}^{'})$ induces a linear map
 $$\Theta_{\rho}:\S\MN_{\Theta^{*}(\rho)}\rightarrow \S(M^{'},\mathcal{N}^{'})_{\rho}.$$
 We will call $\Theta_{\rho}$ the splitting map for the representation-reduced stated skein module.
 Suppose $\partial D$, where $D$ is the splitting disk, is contained in the boundary component $V$.  In subsection \ref{inj}, we will prove $\Theta_{\rho}$ is injective when $V\cap\cN\neq \emptyset$.

\subsection{Gluing the thickening of the marked triangle}
\def \bT {\mathbb{T}}

Let $\bT$ denote the marked three manifold in the following picture:
$$
\raisebox{-.35in}{
	\begin{tikzpicture}
		\tikzset{->-/.style=
			{decoration={markings,mark=at position #1 with
					{\arrow{latex}}},postaction={decorate}}}
		\draw[line width = 1pt] (0,0) rectangle (4.8, 2);
		\draw [dashed] (0,0)--(2.4,2.4);
		\draw [dashed] (2.4,2.4)--(2.4,4.4);
		\draw [dashed] (2.4,2.4)--(4.8,0);
		\draw [line width = 1pt] (0,2)--(2.4,4.4);
		\draw [line width = 1pt] (2.4,4.4)--(4.8,2);
		\draw [color=red, line width = 1pt] (1.2,1.2)--(1.2,1.9);
		\draw [color=red, line width = 1pt] (1.2,2.1)--(1.2,3.2);
		\draw [color=red,->, line width = 1pt] (1.2,2.1)--(1.2,2.8);
		\draw [color=red, line width = 1pt] (3.6,1.2)--(3.6,1.9);
		\draw [color=red, line width = 1pt] (3.6,2.1)--(3.6,3.2);
		\draw [color=red,->, line width = 1pt] (3.6,2.1)--(3.6,2.8);
		\draw [color=red, line width = 1pt] (2.6,0)--(2.6,2);
		\draw [color=red,->, line width = 1pt] (2.6,0)--(2.6,1.6);
		\node[right] at(1.2,2.5){$e_1$};
		\node[left] at(3.6,2.5){$e_2$};
		\node[right] at(2.6,1.3){$e_3$};
\end{tikzpicture}}
$$

Then $\bT$ is the thickening of $\mathfrak T$.


For each $i=1,2,3,$ let $D_{i}$ be an embedded disk in $\partial \bT$ such that the closure of $e_i$ is contained in the interior of $D_i$ and there is no intersection among these three disks. From now on, when we draw $\bT$, we may omit all the black lines, that is, we only draw three red arrows. We also only draw involved markings and stated tangles when we try to draw stated tangles in marked three manifolds.

Let $\MN$ be any marked three manifold with $\sharp \mathcal{N}\geq 2$. Suppose $e_1^{'},e_2^{'}$ are two components of $\mathcal{N}$. For each $i=1,2$, let $D_{i}^{'}$ be an embedded disk on the boundary of $M$ such that the intersection between the closure of $\mathcal{N}$ and $D_i$ is the closure of $e_i$ and the closure of $e_i$ is contained in the interior of $D_i$ and $D_1\cap D_2 =\emptyset$. For each $i=1,2$, let $\phi_{i} : D_i^{'} \rightarrow D_i$ be a diffeomorphism such that $\phi_i(e_{i}^{'}) = e_i$ and $\phi_i$ preserves the orientations of $e_i$ and $e_i^{'}$. We set 
$$M_{e_1^{'}\bT e_2^{'}} = (M\cup \bT)/(\phi_i(x) = x, x\in D_i^{'},i=1,2),\;
\mathcal{N}_{e_1^{'}\bT e_2^{'}} = (\mathcal{N}-(e_1^{'}\cup e_2^{'}))\cup e_3.$$
Then $(M_{e_1^{'}\bT e_2^{'}},\mathcal{N}_{e_1^{'}\bT e_2^{'}})$ is a marked three manifold. We use 
$(M,\mathcal{N})_{e_1^{'}\bT e_2^{'}}$ to denote  this marked three manifold.

Then there is a linear isomorphism 
$QF_{e_1^{'},e_2^{'}} : \S\MN\rightarrow \S(M_{e_1^{'}\bT e_2^{'}},\mathcal{N}_{e_1^{'}\bT e_2^{'}})$ \cite{costantino2022stated1,wang2023stated}.
 We use $\iota$ to denote the obvious embedding from $M$ to 
$M_{e_1^{'}\bT e_2^{'}}$.  For any stated $\MN$-tangles, we extend the ends of $\iota(\alpha)$ on each $e_i,i=1,2$, to $e_3$ such that the framing of extended parts contained in $\bT$
is given by the positive direction of $[0,1]$ and all the ends on $e_3$ extended from $e_1$  are higher than all the ends extended from $e_2$. To be precise, see the following picture:
$$
\raisebox{-.60in}{
	\begin{tikzpicture}
		\tikzset{->-/.style=
			{decoration={markings,mark=at position #1 with
					{\arrow{latex}}},postaction={decorate}}}
		\draw [color=red, line width = 1pt] (0,0)--(0,2);
		\draw [color=red,->, line width = 1pt] (0,0)--(0,1);
		\draw [color=red, line width = 1pt] (1.2,-2)--(1.2,0);
		\draw [color=red,->, line width = 1pt] (1.2,-2)--(1.2,-1);
		\draw [color=red, line width = 1pt] (2.4,0)--(2.4,2);
		\draw [color=red,->, line width = 1pt] (2.4,0)--(2.4,1);
		\draw [color=blue, line width = 1pt] (-1,1.2)--(0,1.2);
		\draw [color=blue, line width = 1pt] (-1,1.8)--(0,1.8);
		\draw [color=blue, line width = 1pt] (2.4,0.2)--(3.4,0.2);
		\draw [color=blue, line width = 1pt] (2.4,0.8)--(3.4,0.8);
		\node[left] at(0,1.6){$\vdots$};
		\node[right] at(2.4,0.6){$\vdots$};
		\node[right] at(0,1.2){\small $i_1$};
		\node[right] at(0,1.8){\small $i_{k_1}$};
		\node[left] at(2.4,0.2){\small $j_1$};
		\node[left] at(2.4,0.8){\small $j_{k_2}$};
\end{tikzpicture}}
\longrightarrow
\raisebox{-.60in}{
	\begin{tikzpicture}
		\tikzset{->-/.style=
			{decoration={markings,mark=at position #1 with
					{\arrow{latex}}},postaction={decorate}}}
		\draw [color=red, line width = 1pt] (0,0)--(0,2);
		\draw [color=red,->, line width = 1pt] (0,0)--(0,1);
		\draw [color=red, line width = 1pt] (1.2,-2)--(1.2,0);
		\draw [color=red,->, line width = 1pt] (1.2,-2)--(1.2,-1);
		\draw [color=red, line width = 1pt] (2.4,0)--(2.4,2);
		\draw [color=red,->, line width = 1pt] (2.4,0)--(2.4,1);
		\draw [color=blue, line width = 1pt] (-1,1.2)--(0,1.2);
		\draw [color=blue, line width = 1pt] (-1,1.8)--(0,1.8);
		\draw [color=blue, line width = 1pt] (2.4,0.2)--(3.4,0.2);
		\draw [color=blue, line width = 1pt] (2.4,0.8)--(3.4,0.8);
		\draw [color=blue, line width = 1pt] (1.2,-0.8)--(0,1.2);
		\draw [color=blue, line width = 1pt] (1.2,-0.2)--(0,1.8);
		\draw [color=blue, line width = 1pt] (1.2, -1.8)--(2.4,0.2);
		\draw [color=blue, line width = 1pt] (1.2,-1.2)--(2.4,0.8);
		\node[left] at(0,1.6){$\vdots$};
		\node[right] at(2.4,0.6){$\vdots$};
		\node[right] at(1.2,-0.8){\small $i_1$};
		\node[right] at(1.2,-0.2){\small $i_{k_1}$};
		\node[left] at(1.2, -1.8){\small $j_1$};
		\node[left] at(1.2,-1.2){\small $j_{k_2}$};
\end{tikzpicture}}
$$
where the blue lines are parts of stated tangles  and the framing in the picture is given by the red arrows.
 To simplify notation, we normally omit the subscript for $QF_{e_1^{'},e_2^{'}}$ when there is no confusion.

\begin{lemma}\label{QF}
	We have $QF : \S\MN\rightarrow \S(M_{e_1^{'}\bT e_2^{'}},\mathcal{N}_{e_1^{'}\bT e_2^{'}})$ behaves well with respect the Frobenius map.
%
%
%
%
%
	
\end{lemma}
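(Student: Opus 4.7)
The plan is to verify each of the three conditions in the definition of \emph{behaves well with respect to the Frobenius map} for $QF = QF_{e_1^{'},e_2^{'}}$, following the template used for Lemmas~\ref{embedding} and~\ref{splitting}. For condition (1), that $QF_1$ is an algebra homomorphism at $q^{1/2}=1$, I note that both $QF_1(\alpha\cup\beta)$ and $QF_1(\alpha)\cup QF_1(\beta)$ consist of the same geometric tangle $\iota(\alpha\cup\beta)$ together with the same collection of extension arcs through $\bT$ ending on $e_3$. The only potential discrepancy is the height ordering on $e_3$ between extensions coming from $\alpha$ and from $\beta$; at $q^{1/2}=1$, the height-exchange relation~\eqref{hight} and the crossing relation~\eqref{cross} collapse so that such reorderings are free of coefficients, and the two expressions coincide.

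For condition (2), that $QF\circ \cF = \cF\circ QF_1$, I would check the identity on a generic stated $\MN$-tangle $\alpha = K_1\cup\cdots\cup K_m\cup C_1\cup\cdots\cup C_n$ as in~\eqref{Frobe}. Knot components have no endpoints on $e_1^{'}\cup e_2^{'}$, so $QF$ commutes with taking parallel copies of each $K_i$, giving $QF(K_i^{[T_N]}) = QF(K_i)^{[T_N]}$ by linearity. For each stated arc $C_j$, one must compare ``take $N$ parallel copies, then extend through $\bT$'' against ``extend through $\bT$, then take $N$ parallel copies''. Both operations yield the same bundle of $N$ parallel extension arcs inside $\bT$, but the induced internal height order of the resulting endpoints on $e_3$ may differ between the two procedures. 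I expect this bookkeeping to be the main obstacle: one must carefully track the endpoints produced on $e_3$ by the $N$ parallel copies of an arc and argue that the two ordering conventions differ only by moves available via the transparency relations~\eqref{well_de} for an $\cF$-bundle, combined with the convention built into $QF$ that places extensions from $e_1^{'}$ above those from $e_2^{'}$.

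Condition (3) then follows from (2) by the same calculation as in Lemmas~\ref{embedding} and~\ref{splitting}: for disjoint stated $\MN$-tangles $\alpha,\beta$,
\[
QF(\alpha\cdot\beta) = QF(\cF(\alpha)\cup\beta) = QF(\cF(\alpha))\cup QF(\beta) = \cF(QF_1(\alpha))\cup QF(\beta) = QF_1(\alpha)\cdot QF(\beta).
\]
The only non-trivial equality is $QF(\cF(\alpha)\cup\beta) = QF(\cF(\alpha))\cup QF(\beta)$, which again exploits transparency~\eqref{well_de} of $\cF(\alpha)$: the extended $N$-parallel bundle produced from $\cF(\alpha)$ inside $\bT$ can be slid past the extension arcs of $\beta$ without picking up any $q$-coefficient, so the heights of the corresponding endpoints on $e_3$ may be freely separated from those coming from $\beta$.
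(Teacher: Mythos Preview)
Your approach is the same three-condition verification the paper uses, and your treatment of conditions (1) and (3) is correct.

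For condition (2), however, you are manufacturing an obstacle that is not there. The paper's argument is simply that taking $N$ framed parallel copies of an arc and extending through $\bT$ commute as geometric operations: if you extend a single arc and then take $N$ parallel copies along the framing, the copies of the extension arcs are again straight parallel arcs in $\bT$, and their endpoints on $e_3$ land in a small neighborhood of the original endpoint. Since the original $e_1'$-endpoint was placed strictly above the original $e_2'$-endpoint, the $N$ nearby copies of each remain separated in the same way, so the ``all from $e_1'$ above all from $e_2'$'' convention is automatically satisfied. There is no height discrepancy to repair, and in particular you do not need transparency here. (Your suggested fix would also be delicate: the transparency relations~\eqref{well_de} govern exchanges of an $\cF$-bundle with \emph{other} strands, not internal reorderings within a single $\cF$-bundle.)

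For condition (3), your treatment is in fact more careful than the paper's one-line ``similar to Lemma~\ref{splitting}''. Unlike $\Theta$, the map $QF$ does not obviously satisfy $QF(\gamma_1\cup\gamma_2)=QF(\gamma_1)\cup QF(\gamma_2)$, because the height convention on $e_3$ for the union may interleave endpoints coming from $\gamma_1$ and $\gamma_2$. Your observation that the needed height exchanges involve the transparent element $QF(\cF(\alpha))=\cF(QF_1(\alpha))$ (equality by condition (2)) against strands of $\beta$, and are therefore free by~\eqref{well_de}, is exactly the right justification.
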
\label{666666}
\begin{proof}
	Condition (1) is trivial from the defintion of the map $QF$.
	
	Condition (2): We know $\cF(\alpha) = \alpha^{(N)}$ for any stated arc $\alpha$. Then the operation of taking $N$ parallel copies and the operate of $QF$ commute with each other. This completes the proof for condition (2).
	
	Condition (3): The proof is similar with Lemma \ref{splitting}.
	
\end{proof}


For any $\q$ and $\rho\in \text{MaxSpec}(\sS_1(M_{e_1^{'}\bT e_2^{'}},\mathcal{N}_{e_1^{'}\bT e_2^{'}}))$, Lemma \ref{con} implies $QF : \S\MN\rightarrow \S(M_{e_1^{'}\bT e_2^{'}},\mathcal{N}_{e_1^{'}\bT e_2^{'}})$ induces the linear isomorphism
$$QF_{\rho} : \S\MN_{QF^{*}(\rho)}\rightarrow \S(M_{e_1^{'}\bT e_2^{'}},\mathcal{N}_{e_1^{'}\bT e_2^{'}})_{\rho}.$$

\subsection{Adding an extra marking}
Let $\MN$ be a marked three manifold, and let $e$ be an embedded oriented open interval in $\partial M$ such that there is no intersection between the closure of $e$ and the closure of $\mathcal{N}$. Define a new marked three manifold $(M,\mathcal{N}^{'})$, where $\mathcal{N}^{'} = \mathcal{N}\cup e$. We say $(M,\mathcal{N}^{'})$ is obtained from $\MN$ by adding one extra marking $e$. We use $l:\MN\rightarrow (M,\mathcal{N}^{'})$ to denote the obvious embedding. 

\def \bB {\mathbb{B}}

Let $\bB$ be the three dimensional solid ball with two markings on its boundary, that is, $\bB$ is the thickening of the bigon. We label one marking of $\bB$ as $b$. 

\begin{lemma}\label{key2}
	Let $\MN$ be a marked three manifold, and let $(M,\mathcal{N}^{'})$ be obtained from $\MN$ by adding one extra marking $e$. Suppose $e$ is contained in the component $U$ of $\partial M$ and $U\cap\mathcal{N} \neq\emptyset$. Then we have the following results:
	
	(a) For any $\rho\in\text{MaxSpec}(\sS_1(M,\mathcal{N}^{'}))$, we have
	$$l_{\rho}:\S\MN_{l^{*}(\rho)}\rightarrow \S(M,\mathcal{N}^{'})_{\rho}$$ is injective.
	
	(b) For any $\rho\in\text{MaxSpec}(\sS_1(M,\mathcal{N}^{'}))$,  there exists $\rho^{'}\in \text{MaxSpec}(\sS_1(\mathbb{B}))$ such that
	$$\S(M,\mathcal{N}^{'})_{\rho}\simeq\S\MN_{l^{*}(\rho)}\otimes \S(\mathbb{B})_{\rho^{'}}.$$  Especially, we have $$\dim_{\bC} \S(M,\mathcal{N}^{'})_{\rho} = N^3 \dim_{\bC} \S\MN_{l^{*}(\rho)}.$$
	
\end{lemma}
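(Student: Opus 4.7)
The plan is to establish part (b) first and then deduce part (a) by identifying $l_\rho$ under the resulting tensor decomposition.

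\textbf{Step 1: geometric identification.} Pick a component $e'$ of $\mathcal{N}$ lying in $U$, which exists by the assumption $U\cap\mathcal{N}\neq\emptyset$. Let $b$ and $b'$ denote the two markings of $\bB$. Form the disjoint marked three manifold $(M,\mathcal N)\sqcup\bB$ and glue $\bB$ to $M$ through the thickened triangle $\bT$, identifying (the disks around) $e'$ and $b$ with $e_1$ and $e_2$ of $\bT$ respectively. Topologically, attaching the $3$-ball $\bB\cup\bT$ along a disk in $U$ does not change the underlying $3$-manifold, so the resulting marked three manifold $((M,\mathcal N)\sqcup\bB)_{e'\bT b}$ is diffeomorphic to $M$; its markings are $(\mathcal N-e')\cup\{e_3,b'\}$. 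Identifying $e_3$ with $e'$ and $b'$ with the added marking $e$ yields an isomorphism of marked three manifolds $\Phi:((M,\mathcal N)\sqcup\bB)_{e'\bT b}\xrightarrow{\sim}(M,\mathcal N^{'})$.

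\textbf{Step 2: chain of isomorphisms.} Composing the disjoint-union isomorphism \eqref{union}, the triangle-gluing map of Lemma \ref{QF}, and the embedding-induced map $\Phi_*$ of Lemma \ref{embedding}, all of which behave well with respect to the Frobenius map, and then applying Lemma \ref{con}, one obtains a linear isomorphism
\[
\S(M,\mathcal N^{'})_{\rho}\;\simeq\;\S(M,\mathcal N)_{\rho_1}\otimes\S(\bB)_{\rho'}
\]
for suitable $\rho_1\in\text{MaxSpec}(\sS_1(M,\mathcal N))$ and $\rho'\in\text{MaxSpec}(\sS_1(\bB))$ determined by $\rho$. A direct trace through the classical ($q=1$) composite $\sS_1(M,\mathcal N)\ni\alpha\mapsto\alpha\sqcup\emptyset$, followed by $QF_1$ and $\Phi_1^{-1}$, shows that this composite coincides with $l_*$, forcing $\rho_1=l^*(\rho)$. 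The same trace at the stated-skein-module level identifies $l_\rho$ with the map $x\mapsto x\otimes[\emptyset_{\bB}]$, where $[\emptyset_{\bB}]$ is the class of the empty stated tangle in $\S(\bB)_{\rho'}$.

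\textbf{Step 3: consequences.} Since $[\emptyset_{\bB}]$ is the multiplicative unit of $\S(\bB)_{\rho'}$, hence nonzero, the map $x\mapsto x\otimes[\emptyset_{\bB}]$ is injective; this proves (a). For the dimension statement in (b), we use the fact that $\dim_{\bC}\S(\bB)_{\rho'}=N^3$ for every $\rho'\in\text{MaxSpec}(\sS_1(\bB))$, which follows from the identification of $\S(\bB)$ with the quantum coordinate ring of $SL_2$ at an $N$th root of unity and the fact that it is a free module of rank $N^3$ over its Frobenius centre $\sS_1(\bB)$.

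\textbf{Main obstacle.} The delicate point is the classical verification in Step 2 that the composite agrees with $l_*$: one must track how the $QF$ map extends tangle endpoints at $e'$ through the triangle up to $e_3$ and confirm that, after applying $\Phi^{-1}$, the result is isotopic (rel boundary, with framing) to the original tangle now regarded inside $(M,\mathcal N^{'})$. The presence of the height ordering in $QF$ and the states distributed along $e_3$ make the bookkeeping slightly subtle, but since the tangle being pushed in has no endpoints on the bigon side of the triangle, only one summand survives and it reduces to an honest isotopy. A secondary (but smaller) issue is confirming $\dim_{\bC}\S(\bB)_{\rho'}=N^3$; while standard, it requires matching the skein-theoretic description of $\S(\bB)$ with the quantum-group description to certify that no degeneration occurs at the particular $\rho'$ appearing in the decomposition.
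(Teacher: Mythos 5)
Your argument is correct and follows essentially the same route as the paper: both rest on the triangle-gluing decomposition $\mathscr{S}(M,\mathcal N')\simeq\mathscr{S}(M,\mathcal N)\otimes\mathscr{S}(\mathbb B)$ together with the observation that the constituent maps (disjoint union, $QF$, embedding) behave well with respect to the Frobenius map, so that the isomorphism descends to the representation-reduced level via Lemma \ref{con}; the paper simply cites Theorem 6.10 of \cite{wang2023stated} for the decomposition, whereas you reconstruct it. Two small remarks: the $QF$ map applied to a single stated tangle extends endpoints without summing, so the phrase \emph{only one summand survives} is a misnomer, though the underlying observation (no endpoints on the bigon side, hence the extension is an honest isotopy) is what matters; and injectivity of $x\mapsto x\otimes[\emptyset_{\mathbb B}]$ tacitly requires $\mathscr{S}(\mathbb B)_{\rho'}\neq 0$, which you supply only through the dimension count $\dim_{\mathbb C}\mathscr{S}(\mathbb B)_{\rho'}=N^3$, so your proof of (a) quietly depends on the computation you place under (b).
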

\begin{proof}
	This Lemma is implied by Theorem 6.10 in \cite{wang2023stated} since all the maps in Theorem 6.10 in \cite{wang2023stated} behave well with respect to the Frobenius map as shown in Lemmas \ref{embedding}, \ref{QF}.

From Theorem 4.10 in \cite{wang2023frobenius}, we know $\dim_{\bC} \S(\mathbb{B})_{\rho^{'}}= N^3$. So we have 
$$\dim_{\bC} \S(M,\mathcal{N}^{'})_{\rho} = N^3 \dim_{\bC} \S\MN_{l^{*}(\rho)}.$$
\end{proof}
\def \bT {\mathbb{T}}
	

\subsection{Injectivity for the splitting map}\label{inj}
In this subsection we will  show the splitting map for the representation-reduced stated skein modules is injective when the boundary component of the three manifold, containing the boundary of the splitting disk, contains at least one marking.

We define $c_+(\q) = q^{\frac{1}{2}}$ and $c_{-}(\q) = -(q^{\frac{1}{2}})^5$. We define a map $\overline{\cdot}:\{-,+\}\rightarrow \{-,+\}$ such that 
$\overline{+} = -$ and $\overline{-} = +$. Note that $c_+(\q)^{N} = c_+(1)$ and $c_-(\q)^N = c_-(1)$.

We use $\bar H$ to denote the negative half-twist for parallel strands, please see Figure \ref{twist}.

\begin{figure}[h]
	\centering
	\includegraphics[scale=0.5]{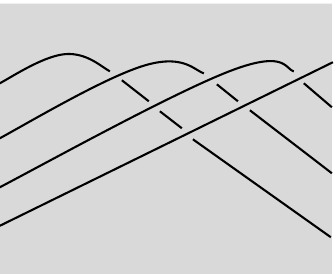}
	\caption{The negative half-twist for four strands.}\label{twist}
\end{figure}

\begin{lemma}(\cite{le2021stated})
	Let $\MN$ be a marked three manifold, and $e$ be a component of $\cN$. Then there is a linear isomorphism
	$h_{e}: \S\MN\rightarrow \S\MN$  given by
	$$
	h_{e}\left(
	\raisebox{-.40in}{
		
		\begin{tikzpicture}
			\tikzset{->-/.style=
				
				{decoration={markings,mark=at position #1 with
						
						{\arrow{latex}}},postaction={decorate}}}

			\filldraw[draw=white,fill=gray!20] (-1.5,0) rectangle (0, 2.5);
			\draw [line width =1.5pt,decoration={markings, mark=at position 1 with {\arrow{>}}},postaction={decorate}](0,2.5)--(0,0);
			\draw[line width =1pt] (-1.5,0.5)--(0,0.5);
			\draw[line width =1pt] (-1.5,1)--(0,1);
			\draw[line width =1pt] (-1.5,2)--(0,2);
			\node [left] at(0,1.5) {$\vdots$};
			\node [right] at(0,0.5) {$i_1$};
			\node [right] at(0,1) {$i_2$};
			\node [right] at(0,2) {$i_k$};
		\end{tikzpicture}
	}\right)= \left(\frac 1{ \prod_{j=1}^k c_{{i_j}}(\q)}\right) \cdot
	\raisebox{-.40in}{
		
		\begin{tikzpicture}
			\tikzset{->-/.style=
				
				{decoration={markings,mark=at position #1 with
						
						{\arrow{latex}}},postaction={decorate}}}

			\filldraw[draw=white,fill=gray!20] (-1.5,0) rectangle (0, 2.5);
			\draw [line width =1pt] (-1,0.2) rectangle (-0.5, 2.3);
			\draw [line width =1.5pt,decoration={markings, mark=at position 1 with {\arrow{>}}},postaction={decorate}](0,2.5)--(0,0);
			\draw[line width =1pt] (-1.5,0.5)--(-1,0.5);
			\draw[line width =1pt] (-0.5,0.5)--(0,0.5);
			\draw[line width =1pt] (-1.5,1)--(-1,1);
			\draw[line width =1pt] (-0.5,1.5)--(0,1.5);
			\draw[line width =1pt] (-1.5,2)--(-1,2);
			\draw[line width =1pt] (-0.5,2)--(0,2);
			\node [left] at(0,1) {$\vdots$};
			\node [left] at(-1,1.5) {$\vdots$};
			\node [right] at(0,0.5) {$\overline{i_k}$};
			\node [right] at(0,1.5) {$\overline{i_2}$};
			\node [right] at(0,2) {$\overline{i_1}$};
			\node  at(-0.75,1.25) {$\bar{H}$};
		\end{tikzpicture}
	}
	$$
	where $i_j\in\{-,+\}$, for $1\leq j\leq k$, and  the  thick line with an arrow is a part of the marking $e$.
\end{lemma}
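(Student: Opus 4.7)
The plan is to establish the lemma in three stages: construction of $h_e$, verification of well-definedness, and invertibility.

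First, I would define $h_e$ on representatives of stated $(M,\cN)$-tangles by the given formula and then extend $\mathbb{C}$-linearly. For a stated $(M,\cN)$-tangle $\alpha$ with $k$ endpoints on $e$, isotope $\alpha$ so that a fixed collar neighborhood of $e$ contains $k$ height-ordered parallel strands; then insert the negative half-twist $\bar H$ on these strands, reverse each state via $i_j\mapsto\overline{i_j}$, and multiply by $\prod_{j=1}^{k}c_{i_j}(\q)^{-1}$. Tangles that do not meet $e$ are left unchanged, so the formula is local at $e$.

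The main work is to show that this descends to a well-defined linear map modulo the skein relations \eqref{cross}--\eqref{hight}. The crossing and closed-loop relations \eqref{cross}, \eqref{unknot} occur in the interior of $M$ and can be isotoped away from the collar of $e$; since the modification is supported in that collar and the coefficient depends only on the multiset of states on $e$, those two relations are preserved tautologically, as is isotopy away from $e$. The delicate checks are the arc relation \eqref{arc} and the height-exchange relation \eqref{hight} when they occur at $e$ itself: both reduce to explicit local computations tracking how $\bar H$ interacts with capping arcs and adjacent-strand swaps near $e$. The specific values $c_+(\q)=q^{1/2}$ and $c_-(\q)=-(q^{1/2})^5$ are tuned precisely so that these two families of local identities balance; this is the heart of the verification.

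Finally, I would prove that $h_e$ is bijective by constructing an explicit two-sided inverse using the positive half-twist $H$ (the mirror of $\bar H$) together with coefficients $\prod_{l}c_{\overline{j_l}}(\q)$ on a tangle whose states on $e$ are $j_1,\ldots,j_k$. Composing $h_e$ with this candidate produces $H\bar H$ on the $k$ strands near $e$, which is isotopic to the trivial braid, while the coefficient products cancel strand-by-strand because, after the intermediate state-reversal, each strand contributes $c_{i_l}(\q)^{-1}c_{i_l}(\q)=1$. This yields $h_e^{-1}\circ h_e=\mathrm{id}$ and symmetrically $h_e\circ h_e^{-1}=\mathrm{id}$, so $h_e$ is a linear isomorphism.

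The main obstacle is the local verification in the middle stage: the arc and height-exchange identities at $e$ require careful bookkeeping of signs, half-integer powers of $q$, and the expansion of $\bar H$ via \eqref{cross}. This is exactly the computation that forces the specific values of $c_{\pm}(\q)$, and it is also why the result is cited from \cite{le2021stated} rather than reproved in detail.
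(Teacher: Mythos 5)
The paper does not prove this lemma; it is stated with a citation to \cite{le2021stated}, so there is no in-paper argument to compare against. Your outline is the natural direct strategy, and the bookkeeping you give for the inverse is correct: applying the positive half-twist with coefficients $\prod_l c_{\overline{j_l}}(\q)$ to the output of $h_e$ returns the original order of heights and states, and the product of coefficients telescopes to $1$ strand by strand. Likewise your observation that relations \eqref{cross} and \eqref{unknot} can be isotoped away from the collar of $e$, so only \eqref{arc} and \eqref{hight} need to be checked at $e$, is the right reduction.

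That said, as a proof this is a sketch rather than an argument: the entire content of the lemma lives in the two local identities you defer, namely that the half-twist together with state reversal and the normalizing coefficients $c_{\pm}(\q)$ is compatible with the cap relation \eqref{arc} and the height-exchange relation \eqref{hight} at $e$. Since you neither carry these out nor cite where they are carried out, the well-definedness of $h_e$ is asserted rather than established, and this is precisely what the paper offloads to \cite{le2021stated}. One further point worth flagging: in the stated skein literature this map is typically not verified relation-by-relation but is obtained via the identification of $\sS(\mathcal{B})$ with $\mathcal{O}_{q}(SL_2)$ and the compatibility of the half-twist with the splitting homomorphism, so the map is first built on the bigon and then propagated to general marked three-manifolds by functoriality. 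That route packages the local computations into facts about $\mathcal{O}_{q}(SL_2)$, and if you intend to make your verification rigorous it would be more economical to follow it than to expand $\bar H$ strand-by-strand against \eqref{cross}.
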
  


\begin{lemma}\label{change}
	Let $\MN$ be a marked three manifold, and $e$ be a component of $\cN$. We have
	$h_{e}: \S\MN\rightarrow \S\MN$ behaves well with respect the Frobenius map.
\end{lemma}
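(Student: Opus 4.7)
The plan is to verify each of the three conditions in the definition of ``behaving well with respect to the Frobenius map'' for the map $h_e$, using as levers the two identities $c_+(\q)^N = c_+(1)$ and $c_-(\q)^N = c_-(1)$ together with the transparency property \eqref{well_de} of $\cF$-images.

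For condition (1), at $\q=1$ the crossing relation \eqref{cross} becomes symmetric, so the half-twist $\bar H$ reduces to a simple strand-reversal in $\sS_1\MN$ modulo skein relations. Given disjoint stated tangles $\alpha,\beta\in\sS_1\MN$ with respective endpoints $(i_1,\ldots,i_k)$ and $(j_1,\ldots,j_l)$ on $e$, I would isotope so that all $\alpha$-endpoints at $e$ lie strictly below all $\beta$-endpoints. The prefactor $\bigl(\prod c_{i_j}(1)\prod c_{j_s}(1)\bigr)^{-1}$ factors across the two pieces, and at $\q=1$ the commutativity of $\sS_1\MN$ and the absence of a nontrivial crossing contribution allow $\bar H$ on $k+l$ strands to be decomposed into $\bar H$ on $\alpha$'s $k$ strands stacked with $\bar H$ on $\beta$'s $l$ strands. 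This yields $h_{e,1}(\alpha\cup\beta)=h_{e,1}(\alpha)\cup h_{e,1}(\beta)$.

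For condition (2), compute $h_{e,\q}(\cF(\alpha))$ for a stated $\MN$-tangle $\alpha$ with endpoints $(i_1,\ldots,i_k)$ on $e$. By definition \eqref{Frobe}, $\cF(\alpha)$ has $N$ parallel copies of each arc-endpoint, so the states at $e$ are $(i_1,\ldots,i_1,i_2,\ldots,i_2,\ldots,i_k,\ldots,i_k)$ with each $i_j$ repeated $N$ times. The scalar prefactor is $\prod_j c_{i_j}(\q)^{-N}$, and the identities $c_\pm(\q)^N=c_\pm(1)$ convert this precisely to the prefactor $\prod_j c_{i_j}(1)^{-1}$ that appears in $\cF(h_{e,1}(\alpha))$. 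On the braid side, the negative half-twist on $Nk$ strands grouped into $k$ blocks of $N$ equals, in the skein module, the $N$-cabling of the half-twist on $k$ strands with reversed states; together with the fact that state-reversal commutes with the $N$-fold parallel operation, this gives $\bar H_{Nk}\cdot \cF(\alpha)=\cF(\bar H_k\cdot \alpha)$ and hence $h_{e,\q}\circ\cF=\cF\circ h_{e,1}$. This cabling step is the main obstacle; it would most cleanly be justified by observing that cabling $\bar H$ strand-by-strand yields a braid in the $Nk$-strand braid group whose skein image agrees with $\bar H_{Nk}$, which can be checked inductively on elementary crossings using \eqref{cross}.

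For condition (3), take $\alpha\in\sS_1\MN$ and $\beta\in\S\MN$ with disjoint supports, so $\alpha\cdot\beta=\cF(\alpha)\cup\beta$. Apply $h_{e,\q}$ to the combined tangle, which has $Nk+l$ strands at $e$ if $\alpha$ contributes $k$ endpoints and $\beta$ contributes $l$ endpoints. The transparency of $\cF(\alpha)$ stated in \eqref{well_de} lets us slide each strand of $\cF(\alpha)$ through each strand of $\beta$ at will, so inside the half-twist $\bar H$ on $Nk+l$ strands we may reroute all of $\cF(\alpha)$'s strands to lie below all of $\beta$'s strands without changing the skein element. The half-twist then factors as $\bar H$ on $\cF(\alpha)$'s $Nk$ strands stacked with $\bar H$ on $\beta$'s $l$ strands, and the scalar prefactor splits similarly. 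Using condition (2) to rewrite $h_{e,\q}(\cF(\alpha))=\cF(h_{e,1}(\alpha))$, the result equals $\cF(h_{e,1}(\alpha))\cup h_{e,\q}(\beta)=h_{e,1}(\alpha)\cdot h_{e,\q}(\beta)$, completing the verification.
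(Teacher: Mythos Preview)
Your approach is essentially the same as the paper's: split into the scalar prefactor, handled by $c_\pm(\q)^N=c_\pm(1)$, and the half-twist/cabling compatibility. The paper likewise treats condition~(1) as trivial and defers condition~(3) to the argument for condition~(2).

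The one place your sketch is weaker than the paper's is the justification of the cabling step. You write that the $N$-cabled $\bar H_k$ ``yields a braid in the $Nk$-strand braid group whose skein image agrees with $\bar H_{Nk}$, which can be checked inductively on elementary crossings using \eqref{cross}.'' But in the braid group $\bar H_{Nk}$ differs from the $N$-cable of $\bar H_k$ by an additional $\bar H_N$ on each block of $N$ strands; the content of the step is that these extra block half-twists are trivial \emph{in the skein module}, and this is not a consequence of \eqref{cross} alone---it depends on the root-of-unity condition and on the boundary relation \eqref{arc}. The paper makes exactly this decomposition explicit (big $\bar H$ on $k$ cabled blocks, followed by a small $\bar H$ on each block of $N$ identically-stated strands) and then invokes relations \eqref{cross} and \eqref{arc} to kill the small $\bar H$'s. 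Your inductive-on-crossings justification using only \eqref{cross} would not go through at generic $q$, so you should flag \eqref{arc} (and the order-$N$ hypothesis) as essential here rather than suggesting a purely Kauffman-bracket computation.
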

\begin{proof}
	Condition (1) is trivial.
	
	Condition (2): 
	We use 
	$\raisebox{-.15in}{
		
		\begin{tikzpicture}
			\tikzset{->-/.style=
				
				{decoration={markings,mark=at position #1 with
						
						{\arrow{latex}}},postaction={decorate}}}

			\filldraw[draw=white,fill=gray!20] (-1.5,0.1) rectangle (0, 0.9);
			\draw [line width =1.5pt,decoration={markings, mark=at position 1 with {\arrow{>}}},postaction={decorate}](0,0.9)--(0,0.1);
			\draw[line width =1pt] (-1.5,0.5)--(0,0.5);
			\draw [fill=gray!20] (-1,0.5) circle (0.2);
			\node [right] at(0,0.5) {$i$};
				\node  at(-1,0.5) {\small $N$};
		\end{tikzpicture}
	}$ 
	to denote $N$ parallel copies of the corresponding stated arc (they have the same state). Thus $\cF(\raisebox{-.15in}{
		
		\begin{tikzpicture}
			\tikzset{->-/.style=
				
				{decoration={markings,mark=at position #1 with
						
						{\arrow{latex}}},postaction={decorate}}}

			\filldraw[draw=white,fill=gray!20] (-1.5,0.1) rectangle (0, 0.9);
			\draw [line width =1.5pt,decoration={markings, mark=at position 1 with {\arrow{>}}},postaction={decorate}](0,0.9)--(0,0.1);
			\draw[line width =1pt] (-1.5,0.5)--(0,0.5);
			\node [right] at(0,0.5) {$i$};
		\end{tikzpicture}
	}) =\raisebox{-.15in}{
	
	\begin{tikzpicture}
		\tikzset{->-/.style=
			
			{decoration={markings,mark=at position #1 with
					
					{\arrow{latex}}},postaction={decorate}}}

		\filldraw[draw=white,fill=gray!20] (-1.5,0.1) rectangle (0, 0.9);
		\draw [line width =1.5pt,decoration={markings, mark=at position 1 with {\arrow{>}}},postaction={decorate}](0,0.9)--(0,0.1);
		\draw[line width =1pt] (-1.5,0.5)--(0,0.5);
		\draw [fill=gray!20] (-1,0.5) circle (0.2);
		\node [right] at(0,0.5) {$i$};
		\node  at(-1,0.5) {\small $N$};
	\end{tikzpicture}
	}$.
We use $\raisebox{-.40in}{\begin{tikzpicture}
		\tikzset{->-/.style=				
			{decoration={markings,mark=at position #1 with
					{\arrow{latex}}},postaction={decorate}}}
		\filldraw[draw=white,fill=gray!20] (-1.5,0) rectangle (1, 2.5);
		\draw [line width =1pt] (-1,0.2) rectangle (-0.5, 2.3);
		\draw[line width =1pt] (-1.5,0.5)--(-1,0.5);
		\draw[line width =1pt] (-0.5,0.5)--(1,0.5);
		\draw[line width =1pt] (-1.5,1)--(-1,1);
		\draw[line width =1pt] (-0.5,1.5)--(1,1.5);
		\draw[line width =1pt] (-1.5,2)--(-1,2);
		\draw[line width =1pt] (-0.5,2)--(1,2);
		\draw [fill=gray!20] (0.3,0.5) circle (0.2);
		\draw [fill=gray!20] (0.3,1.5) circle (0.2);
		\draw [fill=gray!20] (0.3,2) circle (0.2);
		\node [left] at(0,1) {$\vdots$};
		\node [left] at(-1,1.5) {$\vdots$};
		\node  at(0.3,0.5) {\small $N$};
		\node  at(0.3,1.5) {\small $N$};
		\node  at(0.3,2) {\small $N$};
		\node  at(-0.75,1.25) {$\bar{H}$};
	\end{tikzpicture}}$
to denote the picture obtained from parallel strands by first taking the negative half-twist and then taking $N$ parallel copies of each strand. We use $\raisebox{-.15in}{
	
	\begin{tikzpicture}
		\tikzset{->-/.style=
			
			{decoration={markings,mark=at position #1 with
					
					{\arrow{latex}}},postaction={decorate}}}

		\filldraw[draw=white,fill=gray!20] (-1.5,0.1) rectangle (0, 0.9);
		\draw[line width =1pt] (-1.5,0.5)--(0,0.5);
		\draw [fill=gray!20] (-1,0.5) circle (0.2);
		\node  at(-1,0.5) {\small $N$};
\draw [fill=gray!20] (-0.6,0.25) rectangle (-0.2, 0.75);
\node  at(-0.4,0.5) {\small $\bar H$};
	\end{tikzpicture}
	}$ to denote the picture obtained from $N$ parallel strands by taking the negative half-twist.

	We have 
	\begin{align*}
	h_{e}\circ \cF\left(
	\raisebox{-.40in}{
		\begin{tikzpicture}
			\tikzset{->-/.style=
				{decoration={markings,mark=at position #1 with
						{\arrow{latex}}},postaction={decorate}}}
			\filldraw[draw=white,fill=gray!20] (-1.5,0) rectangle (0, 2.5);
			\draw [line width =1.5pt,decoration={markings, mark=at position 1 with {\arrow{>}}},postaction={decorate}](0,2.5)--(0,0);
			\draw[line width =1pt] (-1.5,0.5)--(0,0.5);
			\draw[line width =1pt] (-1.5,1)--(0,1);
			\draw[line width =1pt] (-1.5,2)--(0,2);
			\node [left] at(0,1.5) {$\vdots$};
			\node [right] at(0,0.5) {$i_1$};
			\node [right] at(0,1) {$i_2$};
			\node [right] at(0,2) {$i_k$};
		\end{tikzpicture}
	}\right)=&
		h_{e}\left(
	\raisebox{-.40in}{
		\begin{tikzpicture}
			\tikzset{->-/.style=
				{decoration={markings,mark=at position #1 with
						{\arrow{latex}}},postaction={decorate}}}
			\filldraw[draw=white,fill=gray!20] (-1.5,0) rectangle (0, 2.5);
			\draw [line width =1.5pt,decoration={markings, mark=at position 1 with {\arrow{>}}},postaction={decorate}](0,2.5)--(0,0);
			\draw[line width =1pt] (-1.5,0.5)--(0,0.5);
			\draw[line width =1pt] (-1.5,1)--(0,1);
			\draw[line width =1pt] (-1.5,2)--(0,2);
			\draw [fill=gray!20] (-1,0.5) circle (0.2);
			\draw [fill=gray!20] (-1,1) circle (0.2);
			\draw [fill=gray!20] (-1,2) circle (0.2);
			\node [left] at(0,1.5) {$\vdots$};
			\node [right] at(0,0.5) {$i_1$};
			\node [right] at(0,1) {$i_2$};
			\node [right] at(0,2) {$i_k$};
			\node  at(-1,0.5) {\small $N$};
			\node  at(-1,1) {\small $N$};
			\node  at(-1,2) {\small $N$};
		\end{tikzpicture}
	}\right)
	\\=&
	 \left(\frac 1{ \prod_{j=1}^k c_{{i_j}}(\q)^N}\right) \cdot
	\raisebox{-.40in}{		
		\begin{tikzpicture}
			\tikzset{->-/.style=				
				{decoration={markings,mark=at position #1 with
						{\arrow{latex}}},postaction={decorate}}}
			\filldraw[draw=white,fill=gray!20] (-1.5,0) rectangle (1.5, 2.5);
			\draw [line width =1pt] (-1,0.2) rectangle (-0.5, 2.3);
			\draw [line width =1.5pt,decoration={markings, mark=at position 1 with {\arrow{>}}},postaction={decorate}](1.5,2.5)--(1.5,0);
			\draw[line width =1pt] (-1.5,0.5)--(-1,0.5);
			\draw[line width =1pt] (-0.5,0.5)--(1.5,0.5);
			\draw[line width =1pt] (-1.5,1)--(-1,1);
			\draw[line width =1pt] (-0.5,1.5)--(1.5,1.5);
			\draw[line width =1pt] (-1.5,2)--(-1,2);
			\draw[line width =1pt] (-0.5,2)--(1.5,2);
			\draw [fill=gray!20] (0.3,0.5) circle (0.2);
			\draw [fill=gray!20] (0.3,1.5) circle (0.2);
			\draw [fill=gray!20] (0.3,2) circle (0.2);
			\draw [fill=gray!20] (0.7,0.3) rectangle (1.1, 0.7);
			\draw [fill=gray!20] (0.7,1.3) rectangle (1.1, 1.7);
			\draw [fill=gray!20] (0.7,1.8) rectangle (1.1, 2.2);
			\node [left] at(0,1) {$\vdots$};
			\node [left] at(-1,1.5) {$\vdots$};
			\node [right] at(1.5,0.5) {$\overline{i_k}$};
			\node [right] at(1.5,1.5) {$\overline{i_2}$};
			\node [right] at(1.5,2) {$\overline{i_1}$};
			\node  at(0.3,0.5) {\small $N$};
			\node  at(0.9,0.5) {\small $\bar{H}$};
			\node  at(0.9,2) {\small $\bar{H}$};
			\node  at(0.9,1.5) {\small $\bar{H}$};
			\node  at(0.3,1.5) {\small $N$};
			\node  at(0.3,2) {\small $N$};
			\node  at(-0.75,1.25) {$\bar{H}$};
		\end{tikzpicture}
	}\\=&
	\left(\frac 1{ \prod_{j=1}^k c_{{i_j}}(1)}\right) \cdot
	\raisebox{-.40in}{		
	\begin{tikzpicture}
		\tikzset{->-/.style=				
			{decoration={markings,mark=at position #1 with
					{\arrow{latex}}},postaction={decorate}}}
		\filldraw[draw=white,fill=gray!20] (-1.5,0) rectangle (1.5, 2.5);
		\draw [line width =1pt] (-1,0.2) rectangle (-0.5, 2.3);
		\draw [line width =1.5pt,decoration={markings, mark=at position 1 with {\arrow{>}}},postaction={decorate}](1.5,2.5)--(1.5,0);
		\draw[line width =1pt] (-1.5,0.5)--(-1,0.5);
		\draw[line width =1pt] (-0.5,0.5)--(1.5,0.5);
		\draw[line width =1pt] (-1.5,1)--(-1,1);
		\draw[line width =1pt] (-0.5,1.5)--(1.5,1.5);
		\draw[line width =1pt] (-1.5,2)--(-1,2);
		\draw[line width =1pt] (-0.5,2)--(1.5,2);
		\draw [fill=gray!20] (0.3,0.5) circle (0.2);
		\draw [fill=gray!20] (0.3,1.5) circle (0.2);
		\draw [fill=gray!20] (0.3,2) circle (0.2);
		\node [left] at(0,1) {$\vdots$};
		\node [left] at(-1,1.5) {$\vdots$};
		\node [right] at(1.5,0.5) {$\overline{i_k}$};
		\node [right] at(1.5,1.5) {$\overline{i_2}$};
		\node [right] at(1.5,2) {$\overline{i_1}$};
		\node  at(0.3,0.5) {\small $N$};
		\node  at(0.3,1.5) {\small $N$};
		\node  at(0.3,2) {\small $N$};
		\node  at(-0.75,1.25) {$\bar{H}$};
	\end{tikzpicture}},
	\end{align*}
	where the last equality is because of relations \eqref{cross} and \eqref{arc}.
	\begin{align*}
		\cF\circ h_e\left(
		\raisebox{-.40in}{
			\begin{tikzpicture}
				\tikzset{->-/.style=
					{decoration={markings,mark=at position #1 with
							{\arrow{latex}}},postaction={decorate}}}
				\filldraw[draw=white,fill=gray!20] (-1.5,0) rectangle (0, 2.5);
				\draw [line width =1.5pt,decoration={markings, mark=at position 1 with {\arrow{>}}},postaction={decorate}](0,2.5)--(0,0);
				\draw[line width =1pt] (-1.5,0.5)--(0,0.5);
				\draw[line width =1pt] (-1.5,1)--(0,1);
				\draw[line width =1pt] (-1.5,2)--(0,2);
				\node [left] at(0,1.5) {$\vdots$};
				\node [right] at(0,0.5) {$i_1$};
				\node [right] at(0,1) {$i_2$};
				\node [right] at(0,2) {$i_k$};
			\end{tikzpicture}
		}\right)=&
		\left(\frac 1{ \prod_{j=1}^k c_{{i_j}}(1)}\right) \cdot \cF\left(
		\raisebox{-.40in}{
			\begin{tikzpicture}
				\tikzset{->-/.style=
					{decoration={markings,mark=at position #1 with
							{\arrow{latex}}},postaction={decorate}}}
				\filldraw[draw=white,fill=gray!20] (-1.5,0) rectangle (0, 2.5);
				\draw [line width =1pt] (-1,0.2) rectangle (-0.5, 2.3);
				\draw [line width =1.5pt,decoration={markings, mark=at position 1 with {\arrow{>}}},postaction={decorate}](0,2.5)--(0,0);
				\draw[line width =1pt] (-1.5,0.5)--(-1,0.5);
				\draw[line width =1pt] (-0.5,0.5)--(0,0.5);
				\draw[line width =1pt] (-1.5,1)--(-1,1);
				\draw[line width =1pt] (-0.5,1.5)--(0,1.5);
				\draw[line width =1pt] (-1.5,2)--(-1,2);
				\draw[line width =1pt] (-0.5,2)--(0,2);
				\node [left] at(0,1) {$\vdots$};
				\node [left] at(-1,1.5) {$\vdots$};
				\node [right] at(0,0.5) {$\overline{i_k}$};
				\node [right] at(0,1.5) {$\overline{i_2}$};
				\node [right] at(0,2) {$\overline{i_1}$};
				\node  at(-0.75,1.25) {$\bar{H}$};
			\end{tikzpicture}
		}\right)
		\\=&
		\left(\frac 1{ \prod_{j=1}^k c_{{i_j}}(1)}\right) \cdot
		\raisebox{-.40in}{		
			\begin{tikzpicture}
				\tikzset{->-/.style=				
					{decoration={markings,mark=at position #1 with
							{\arrow{latex}}},postaction={decorate}}}
				\filldraw[draw=white,fill=gray!20] (-1.5,0) rectangle (1.5, 2.5);
				\draw [line width =1pt] (-1,0.2) rectangle (-0.5, 2.3);
				\draw [line width =1.5pt,decoration={markings, mark=at position 1 with {\arrow{>}}},postaction={decorate}](1.5,2.5)--(1.5,0);
				\draw[line width =1pt] (-1.5,0.5)--(-1,0.5);
				\draw[line width =1pt] (-0.5,0.5)--(1.5,0.5);
				\draw[line width =1pt] (-1.5,1)--(-1,1);
				\draw[line width =1pt] (-0.5,1.5)--(1.5,1.5);
				\draw[line width =1pt] (-1.5,2)--(-1,2);
				\draw[line width =1pt] (-0.5,2)--(1.5,2);
				\draw [fill=gray!20] (0.3,0.5) circle (0.2);
				\draw [fill=gray!20] (0.3,1.5) circle (0.2);
				\draw [fill=gray!20] (0.3,2) circle (0.2);
				\node [left] at(0,1) {$\vdots$};
				\node [left] at(-1,1.5) {$\vdots$};
				\node [right] at(1.5,0.5) {$\overline{i_k}$};
				\node [right] at(1.5,1.5) {$\overline{i_2}$};
				\node [right] at(1.5,2) {$\overline{i_1}$};
				\node  at(0.3,0.5) {\small $N$};
				\node  at(0.3,1.5) {\small $N$};
				\node  at(0.3,2) {\small $N$};
				\node  at(-0.75,1.25) {$\bar{H}$};
			\end{tikzpicture}
		}.
	\end{align*}
	
	Condition (3): The proof is similar with the proof for Condition (2).
\end{proof}


For any $\q$ and $\rho\in \text{MaxSpec}(\sS_1\MN)$, Lemma \ref{con} implies $$h_e : \S\MN\rightarrow \S\MN$$ induces the linear isomorphism
$$(h_e)_{\rho} : \S\MN_{(h_e)^{*}(\rho)}\rightarrow \S\MN_{\rho}.$$

Let $\MN$ be a marked three maniold,  let $D$ be a properly embedded  disk in $M$, and let $u$ be an embedded oriented open interval in $D$. Then $\text{Cut}_{(D,u)}\MN$ has two copies of $u$, denoted them as $u_1,u_2$. Suppose $\cN^{'}$ is obtained from $\cN$ by adding one extra marking $e$ such that $e$ and $\partial D$ belong to the same  component of $\partial M$. We use $l:\MN\rightarrow (M,\cN^{'})$ to denote the obvious embedding. We have $(M,\cN^{'})$ is isomorphic to $(\text{Cut}_{(D,u)}\MN)_{u_1\bT u_2}$.

\begin{lemma}\label{key}
	There exists an isomorphism $\varphi:(M,\cN^{'})\rightarrow (\text{Cut}_{(D,u)}\MN)_{u_1\bT u_2}$ such that
	$$QF_{u_1,u_2} \circ h_{u_2}\circ \Theta_{(D,u)}
	= \varphi_{*}\circ l_{*}.$$
\end{lemma}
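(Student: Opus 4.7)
The plan is to build $\varphi$ topologically and then verify the identity on a spanning set of stated tangles, where everything reduces to a local skein computation near $u$.

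\textbf{Construction of $\varphi$.} Geometrically, cutting $M$ along $D$ and regluing the two created disk faces by the thickened triangle $\bT$ (identifying its edges $e_1,e_2$ with $u_1,u_2$ with matching orientations) produces a new manifold homeomorphic to $M$: the triangle fills the slit, and its third edge $e_3$ becomes an oriented open arc on $\partial M$ in the same component as $\partial D$. I would choose this homeomorphism so that it is the identity away from a neighbourhood of $D\cup\bT$ and sends $e_3$ to the extra marking $e$ of $(M,\cN')$ (matching orientations), while fixing $\cN$. This defines the desired isomorphism $\varphi:(M,\cN')\to(\text{Cut}_{(D,u)}\MN)_{u_1\bT u_2}$.

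\textbf{Reduction to stated tangles.} Since both sides of the claimed equality are $\bC$-linear, it suffices to compare them on a stated $(M,\cN)$-tangle $\alpha$, isotoped so that $\alpha\cap D=\alpha\cap u$ and the framing at each intersection point agrees with the velocity vector of $u$. Unwinding the left-hand side step by step: $\Theta_{(D,u)}(\alpha)=\sum_{s}\alpha(s)$ cuts the strands at $u$ and sums over matching states at corresponding endpoints on $u_1$ and $u_2$; the map $h_{u_2}$ replaces each $u_2$-state $s$ by $\bar s$, multiplies by $\prod_j 1/c_{s(j)}(\q)$, and reverses the height-order of the $u_2$-ends via $\bar H$; finally $QF_{u_1,u_2}$ extends every $u_1$- and $u_2$-endpoint through $\bT$ to $e_3$, placing all $u_1$-ends above the $u_2$-ends. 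On the right-hand side, $\varphi_*\circ l_*(\alpha)$ is just $\alpha$ viewed in the glued manifold, disjoint from $e_3$. So the identity reduces to showing that the detour through $\bT$ with all its state-sums and twists collapses, via skein relations, to the original tangle.

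\textbf{Local identity.} The three operations act within a collar of $u$, so I may focus on a single strand crossing $u$ once and then invoke locality. For one such strand, the composite produces a sum over $s\in\{-,+\}$, with coefficient $1/c_s(\q)$, of a diagram whose two endpoints lie on $e_3$ with states $s$ (high) and $\bar s$ (low), connected through $\bT$. Applying the height-exchange relation \eqref{hight} and the cap relation \eqref{arc} at $e_3$, the specific values $c_+(\q)=\q$ and $c_-(\q)=-(\q)^5$ are exactly those that cause the off-diagonal term to cancel and the diagonal term to reduce to a single arc through $\bT$ carrying no state at $e_3$. That arc is isotopic inside the glued manifold, via a disk contained in $\bT$, to the image under $\varphi$ of the original strand pushed slightly off $u$, which is $\varphi_*\circ l_*$ of the single-strand case.

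\textbf{Main obstacle.} The real work is the bookkeeping when $\alpha$ meets $u$ at $k\geq 2$ points: $QF_{u_1,u_2}$ places all $u_1$-ends above all $u_2$-ends on $e_3$, whereas $h_{u_2}$ reverses the order of the $u_2$-endpoints via $\bar H$, so the $k$ incoming pairs on $e_3$ are interleaved in a nontrivial way. To match them up in pairs and apply the single-strand identity one needs to braid the strands at $e_3$ back into an alternating "$u_1,u_2,u_1,u_2,\dots$" pattern using \eqref{cross}; the $\bar H$ factor supplied by $h_{u_2}$ is precisely what is needed so that this braiding has trivial net effect. Once the strands are paired, the computation collapses iteratively by the single-strand case. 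This is the same braiding-and-pairing strategy underlying the proofs of the coproduct on the bigon stated skein algebra in \cite{costantino2022stated1} and of Theorem 6.10 in \cite{wang2023stated}, from which the present lemma can be obtained by direct adaptation.
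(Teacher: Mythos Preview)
Your proposal is correct; it sketches the actual content of the result, whereas the paper's own proof consists entirely of the citation ``Proposition 6.3 in \cite{wang2023stated}.'' Your construction of $\varphi$, the reduction to a local identity near $e_3$, and the identification of the multi-strand braiding as the main bookkeeping issue are exactly the ingredients of that cited proposition, and you even point to the same reference at the end.
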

\begin{proof}
	Proposition 6.3 in \cite{wang2023stated}.
\end{proof}


\begin{theorem}
	Let $\MN$ be a marked three maniold,  let $D$ be a properly embedded  disk in $M$, and let $u$ be an embedded oriented open interval in $D$. Suppose the component $V$ of $\partial M$  contains  $\partial D$ and $V\cap \cN\neq\emptyset$. Then, for any $\rho\in \text{MaxSpec}(\sS_1(\text{Cut}_{(D,u)}\MN))$, we have
	$$\Theta_{\rho}:\S\MN_{\Theta^{*}(\rho)}\rightarrow \S(\text{Cut}_{(D,u)}\MN)_{\rho}$$
	is injective.
\end{theorem}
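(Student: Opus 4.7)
The strategy is to transport injectivity across the factorization identity of Lemma \ref{key},
\[
QF_{u_1,u_2}\circ h_{u_2}\circ \Theta_{(D,u)} \;=\; \varphi_*\circ l_*,
\]
and then invoke Lemma \ref{key2}(a) on the right-hand side. All five maps behave well with respect to the Frobenius map (Lemmas \ref{embedding}, \ref{QF}, \ref{splitting}, \ref{change}), so by Lemma \ref{con} each induces a linear map between the appropriate representation-reduced stated skein modules, and the factorization identity persists at the reduced level.

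The crucial point is that $\varphi_*$, $h_{u_2}$, and $QF_{u_1,u_2}$ are linear isomorphisms at every quantum parameter. Consequently, their induced maps on MaxSpec are bijections and the reduced-level maps $(\varphi_*)_?$, $(h_{u_2})_?$, $QF_?$ are themselves linear isomorphisms. Given any $\rho \in \text{MaxSpec}(\sS_1(\text{Cut}_{(D,u)}\MN))$, I would pick the unique $\sigma \in \text{MaxSpec}(\sS_1((\text{Cut}_{(D,u)}\MN)_{u_1\bT u_2}))$ with $h_{u_2}^{*}(QF^{*}(\sigma)) = \rho$, and then specialise the factorisation to
\[
QF_\sigma\circ (h_{u_2})_{QF^{*}(\sigma)}\circ \Theta_\rho \;=\; (\varphi_*)_\sigma\circ (l_*)_{\varphi_*^{*}(\sigma)},
\]
using the identity $\Theta^{*}\circ h_{u_2}^{*}\circ QF^{*} = l_*^{*}\circ \varphi_*^{*}$ to identify the common source $\S\MN_{\Theta^{*}(\rho)}$. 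Lemma \ref{key2}(a) yields injectivity of $(l_*)_{\varphi_*^{*}(\sigma)}$, since the extra marking $e$ lies on the component $V$ and the hypothesis $V\cap \cN\neq\emptyset$ is exactly what that lemma requires. Post-composing with the isomorphism $(\varphi_*)_\sigma$ preserves injectivity, so the right-hand side is injective; cancelling the left-factor isomorphisms $QF_\sigma$ and $(h_{u_2})_{QF^{*}(\sigma)}$ then forces $\Theta_\rho$ to be injective.

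The conceptual heavy lifting has already been done in Lemmas \ref{key} and \ref{key2}, so what remains is essentially bookkeeping: verifying that the factorisation survives the descent to the reduced modules and tracking how the maximal ideals pull back through the diagram. The Frobenius-compatible framework built around Lemma \ref{con} automates this descent. The only point to be attentive about is that $h_{u_2}^{*}$ and $QF^{*}$ really are bijections on MaxSpec, so that every target ideal $\rho$ admits the required $\sigma$; this is immediate from their being linear isomorphisms of stated skein modules at $q^{1/2}=1$. Hence there is no genuine obstacle beyond careful diagram chasing.
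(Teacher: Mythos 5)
Your proposal is correct and follows essentially the same route as the paper: factor $\Theta$ via Lemma \ref{key} as $\varphi_*\circ l_*$ modulo the isomorphisms $QF_{u_1,u_2}$ and $h_{u_2}$, descend this identity to the representation-reduced level using the Frobenius-compatibility lemmas and Lemma \ref{con}, invoke Lemma \ref{key2}(a) for injectivity of the reduced $l_*$, and cancel the flanking isomorphisms. Your presentation is in fact slightly cleaner in explicitly choosing $\sigma$ with $h_{u_2}^*(QF^*(\sigma))=\rho$ up front, and in correctly citing Lemma \ref{change} (for $h_{u_2}$) among the well-behavedness lemmas, which the paper's proof omits from its list.
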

\begin{proof}
	Here we use the notations in Lemma \ref{key}. Suppose $$\rho^{'}\in \text{MaxSpec}(\sS_1((\text{Cut}_{(D,u)}\MN)_{u_1\bT u_2})).$$
	Lemmas \ref{embedding}, \ref{splitting}, \ref{QF}  and \ref{key} imply
	$$(QF_{u_1,u_2})_{\rho^{'}} \circ (h_{u_2})_{QF^{*}(\rho^{'})}\circ (\Theta_{(D,u)})_{(h_{u_2})^{*}(QF^{*}(\rho^{'}))}
	= \varphi_{\rho^{'}}\circ l_{\varphi^{*}(\rho)}.$$
	Since $(QF_{u_1,u_2})_{\rho^{'}},\, (h_{u_2})_{QF^{*}(\rho^{'})}\, , \varphi_{\rho^{'}}$ are all linear isomorphisms, we have $$\text{Ker}( (\Theta_{(D,u)})_{(h_{u_2})^{*}(QF^{*}(\rho^{'}))}) =\text{Ker}(l_{\varphi^{*}(\rho)}).$$
	Lemma \ref{key2} implies $\text{Ker}(\Theta_{(D,u)})_{(h_{u_2})^{*}(QF^{*}(\rho^{'}))}) =\text{Ker}(l_{\varphi^{*}(\rho)}) = 0$. This completes the proof because  both $(h_{u_2})^{*}$ and $QF^{*}$ are bijections.
\end{proof}

\begin{remark}
	The Frobenius map for $SL_n$ was constructed in \cite{wang2023stated} when every component of $M$ contains at least one marking. All the parallel results in this section can be easily generalized to the $SL_n$ case when every component of $M$ contains at least one marking.
\end{remark}

{
\begin{remark}
When the marking set is empty, we expect the splitting map for the representation-reduced stated module is injective if $M$ is the thickening of an oriented surface. 
\end{remark}}

\section{Geometric stated skein}\label{5}
Let $M$ be an oriented connected closed three manifold. Recall that, for any positive integer $k$, we use 
$M_{k}$ to denote the marked three manifold obtained from $M$ by removing $k$ open three dimensional balls and adding one marking to each newly created sphere boundary component. Then $M_k$ is defined up to isomorphism. In this section, we will show $\text{dim}_{\mathbb{C}}
\S( M_{k})_{\rho} = 1$ for any $\rho\in\text{MaxSpec}(\sS_1(M_k))$.

\def \sgk {\Sigma}
\def \hgk {H}
\def \pgk {\partial_+(H)}
\def \mgk {\partial_{-}(H)}

We use $H_g$ to denote the genus $g$ handlebody. For any positive integer $k$, we use $H_{g,k}$ to denote the marked three manifold obtained from $H_g$ by adding $k$ markings on $\partial H_g$. Then $H_{g,k}$ is defined up to isomorphism. We use $\cN_k$ to denote the union of all the markings in $H_{g,k}$, and use $\overline{\cN_k}$ to denote the closure of $\cN_{k}.$
To simplify the notation, we will use $H$ to denote $H_{g,k}$ in this section.

For each marking $e$ of $H$, we can regard its closure $\bar e$ as an embedding from $[0,1]$ to $\partial H_g$.
We choose an embedded disk $D_e$ (respectively $D^{'}_e$) in $\partial H_g$ such that $\bar e\subset D_e$ (respectively $\bar e\subset D^{'}_e$) and  $\bar e\cap \partial D_e = \bar e(0)$ (respectively $\bar e\cap \partial D_e^{'} =\bar e(1)$). We also require there is no intersection among $D_e$ (or $D_e^{'}$).  Then  $\mgk$ (respectively $\pgk$) is obtained from $\partial H_g$ by removing the interior of all $D_e$
(respectively $D_e^{'}$), please refer to Figures \ref{fg1} and \ref{fg2}. 
$\mgk$ (respectively $\pgk$) is  a marked surface with marked points $\mgk \cap \overline{\cN_k}$ (respectively $\pgk \cap \overline{\cN_k}$).

 Then
$\S(\mgk)$ has a right action on $\S(\hgk)$, and
$\S(\pgk)$ has a left action on $\S(\hgk)$.

\begin{figure}[h]
	\centering
	\includegraphics[scale=0.5]{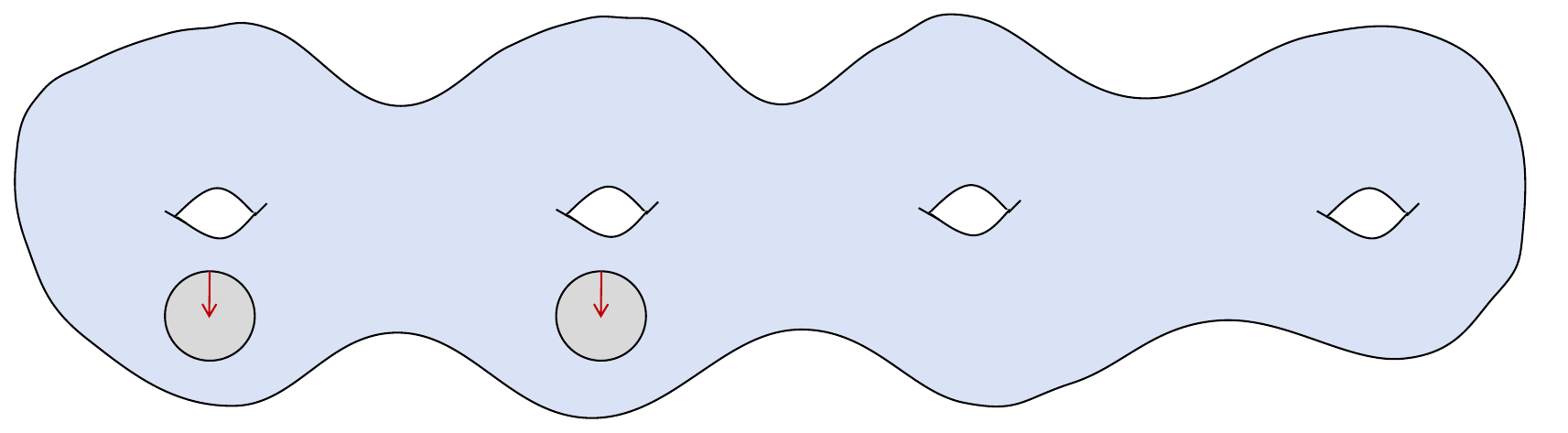}
	\caption{The picture is for $H$, where $g=4$ and $k=2$. 
The gray disks are embedded disks in the boundary of the handlebody. The red arrows contained in the gray disks are markings.
The sub-surface of $\partial H_g$ colored by  blue is denoted as $\mgk$ (as a surface, $\mgk$ is obtained from $\partial H_g$ by removing $k$ open disks). It is  a marked surface with marked points $\mgk \cap \overline{\cN_k}$.}\label{fg1}
\end{figure}

\begin{figure}[h]
	\centering
	\includegraphics[scale=0.5]{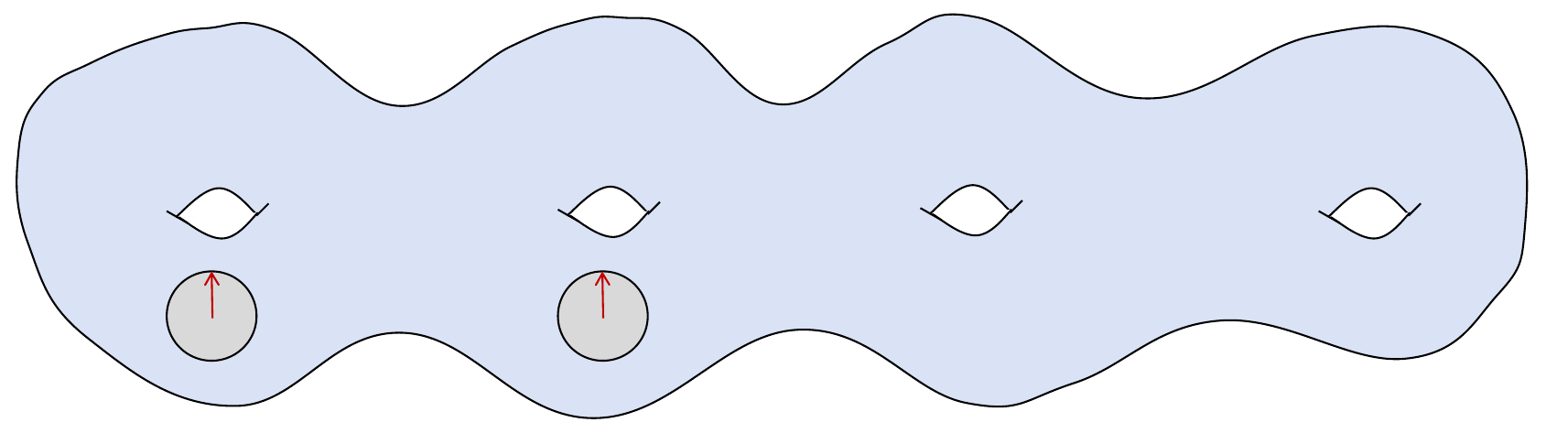}
	\caption{The picture is for $H$, where $g=4$ and $k=2$. The gray disks are embedded disks in the boundary of the handlebody. The red arrows contained in the gray disks are markings.
 The sub-surface of $\partial H_g$ colored by  blue is denoted as $\pgk$ (as a surface, $\pgk$ is obtained from $\partial H_g$ by removing $k$ open disks). It is  a marked surface with marked points $\pgk \cap \overline{\cN_k}$.}\label{fg2}
\end{figure}

We use $\Sigma_g$ to denote the closed surface of genus $g$. For any positive integer $k$, we use $\Sigma_{g,k}$ to denote the marked surface  obtained from $\Sigma_g$ by removing $k$ open disks and equipping each newly created boundary component with one marked point.  
To simplify the notation, we will use $\Sigma$ to denote $\Sigma_{g,k}$ in this section.
We have $\Sigma\simeq \partial_+(H)\simeq \partial_{-}(H)$.
Then any isomorphism $f:\sgk\rightarrow \pgk$ induces a left action of $\S(\sgk)$  on $\S(\hgk)$. Similarly, any isomorphism $g:\sgk\rightarrow \mgk$ induces a right action of $\S(\sgk)$  on $\S(\hgk)$.

We know there is an injective algebra homomorphism
$\cF:\sS_1(\sgk)\rightarrow \S(\sgk)$. From \cite{yu2023center}, we know 
$\text{Im}\cF$ is the center of $\S(\sgk)$. 

Let $\theta: \S(\sgk)\rightarrow \text{End}(V)$ be an irreducible representation of $\S(\sgk)$. Then $\theta$ induces an algebra homomorphism $\varepsilon_{\theta}:\sS_1(\sgk)\rightarrow \bC$ such that 
$\varepsilon_{\theta}(x)Id_V = \theta(\cF(x))$ for any $x\in\sS_1(\sgk)$. We will call $\varepsilon_{\theta}$ the {\bf classical shadow} of $\theta$.
Then the Azumaya locus $\mathcal{A}(\S(\sgk))$ of $\S(\sgk)$ is defined to be 
\begin{align*}
&\mathcal{A}(\S(\sgk)) = \{\rho\in\text{MaxSpec}(\sS_1(\sgk))\mid
 \text{there exists a unique}\\&\text{ irreducible representation } \theta\text{ of }\S(\sgk)\text{ such that } \varepsilon_{\theta} = \rho\}.
\end{align*}
An irreducible representation of $\S(\sgk)$ is called an {\bf Azumaya  representation} if its classical shadow lives in the Azumaya locus of $\S(\sgk)$.

Since $\text{Im}\cF$ is the center of $\S(\sgk)$, the PI-dimension of $\S(\sgk)$ is equal to the square root of the rank of $\S(\sgk)$ over $\sS_1(\sgk)$. 
 From  \cite{wang2023finiteness,yu2023center}, we know the PI-dimension of $\S(\sgk)$ is $N^{3(g+k-1)}$. For any positive integer $n$, we use 
 $\text{Mat}_n(\bC)$ to denote the algebra of all $n$ by $n$ complex matrices. Then 
 \begin{align*}
 	\mathcal{A}(\S(\sgk)) = &\{\rho\in\text{MaxSpec}(\sS_1(\sgk))\mid\\
 	&\S(\sgk)_{\rho}\simeq\text{Mat}_{n}(\bC),\text{ where }n = N^{3(g+k-1)}\}.
 \end{align*}
 
 \def \M {\text{MaxSpec}} 
 
 Suppose $\rho\in\M(\sS_1(\hgk))$ and $f$ is an isomorphism from $\sgk$ to $ \pgk$. It is easy to show that the left action of $\S(\sgk)$ (induced by $f$) on $\S(\hgk)$ reduces to a left action of $\S(\sgk)$ on $\S(\hgk)_{\rho}$.
 
 We can regard $\pgk\times [0,1]$ as a closed regular neighborhood of $\pgk$.
  We use $L$ to denote the embedding from $\pgk\times [0,1]$ to $\hgk$. Then $L$ induces an algebra homomorphism $L_{*}:\sS_1(\pgk)\rightarrow \sS_1(\hgk)$. Then $L_*$ induces a map $L^{*}:\M(\sS_1(\hgk))\rightarrow \M(\sS_1(\pgk))$.
  
  Any isomorphism $f:\sgk\rightarrow \pgk$ induces an isomorphism 
  from $\sgk\times [0,1]$ to $ \pgk\times [0,1]$, which is still denoted as $f$.
  
  In the following theorem, we prove 
  $\S(\hgk)_{\rho}$ is an Azumaya representation for $\S(\sgk)$.
   This generalizes Theorem 12.1 in \cite{frohman2023sliced} (they prove a parallel result for the non-stated case) to the stated case.

 For any marked point $p$ of $\sgk$, we define a curve $\alpha(p)$ as in Figure \ref{fg3}.
 
 \begin{figure}[h]
 	\centering
 	\includegraphics[scale=0.5]{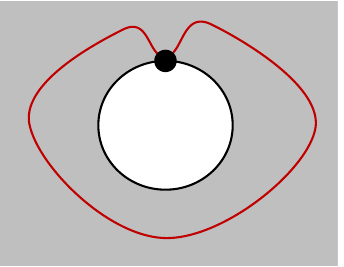}
 	\caption{The circle is one of the boundary components of $\sgk$, the black dot is the marked point $p$ on this boundary component. The red curve is $\alpha(p)$.}\label{fg3}
 \end{figure}
  
 \begin{theorem}\label{Azu}
 	Let $k$ be a positive integer,  let $\rho\in\M(\sS_1(\hgk))$, and let $f$ be an isomorphism from $\sgk$ to $ \pgk$. Then we have the followings:

(a)  $L^*(\rho)\in\mathcal{A}(\S(\pgk))$.

(b)  $\S(\hgk)_{\rho}$ is an irreducible representation of $\S(\sgk)$ whose classical shadow is $f^*(L^*(\rho))$. Meanwhile, this irreducible representation is an Azumaya representation.
 	
 \end{theorem}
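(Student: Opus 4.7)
The plan is to prove (a) and (b) simultaneously in three steps: identifying the classical shadow, computing $\dim_\bC \S(H)_\rho = N^{3(g+k-1)}$ by induction on $g$, and applying standard PI theory.

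For the classical shadow, the left action of $\S(\Sigma)$ on $\S(H)$ is induced by the composition $\Sigma \xrightarrow{f} \pgk \xrightarrow{L} H$, both of which are compatible with the Frobenius map by Lemma \ref{embedding}. Hence for any $x \in \sS_1(\Sigma)$, the central element $\cF(x)$ acts on $\S(H)_\rho$ as the scalar $\rho(L_*(f_*(x))) = f^*(L^*(\rho))(x)$, so the classical shadow of $\S(H)_\rho$ is $\rho'' := f^*(L^*(\rho))$.

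For the dimension, I would induct on $g$. The base case $g=0$ treats $H_{0,k}$, the ball with $k$ markings; iterating Lemma \ref{key2}(b) from $\dim \S(H_{0,1})_\rho = 1$ (which follows from relations \eqref{arc} and \eqref{unknot}, since every stated arc and every closed loop in a ball with a single marking reduces to a scalar multiple of the empty tangle) yields $\dim \S(H_{0,k})_\rho = N^{3(k-1)}$. For the step $g \geq 1$, cut $H_{g,k}$ along a non-separating compressing disk $D$ of $H_g$ equipped with an oriented arc $u$, obtaining $\mathrm{Cut}_{(D,u)} H_{g,k} \cong H_{g-1,k+2}$. Lemma \ref{key} identifies the marked manifold $(H_{g,k}, \cN')$ obtained from $H_{g,k}$ by adding one extra marking with the gluing $(H_{g-1, k+2})_{u_1 \bT u_2}$. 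Combining this isomorphism with Lemma \ref{key2}(b) (which multiplies dimension by $N^3$ upon adding a marking) and with the fact that $QF_{u_1,u_2}$ and $h_{u_2}$ are linear isomorphisms (Lemmas \ref{QF} and \ref{change}), one obtains $N^3 \cdot \dim \S(H_{g,k})_\rho = \dim \S(H_{g-1, k+2})_{\rho^-}$ for a compatibly chosen $\rho^-$. The inductive hypothesis gives $\dim \S(H_{g-1, k+2})_{\rho^-} = N^{3(g+k)}$, hence $\dim \S(H_{g,k})_\rho = N^{3(g+k-1)}$.

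Finally, since the dimension of $\S(H)_\rho$ equals the PI-dimension of $\S(\Sigma)$, and since $\S(H)_\rho$ is cyclic over $\S(\Sigma)$ (every stated tangle in $H$ can be pushed into a collar of $\pgk$, modulo the $\rho$-reduction), $\S(H)_\rho$ is a cyclic $\S(\Sigma)_{\rho''}$-module of dimension equal to the PI-dimension. Basic PI theory then forces $\S(\Sigma)_{\rho''} \cong \mathrm{Mat}_{N^{3(g+k-1)}}(\bC)$, placing $\rho''$ in the Azumaya locus (this gives (a)) and identifying $\S(H)_\rho$ as the unique simple $\S(\Sigma)_{\rho''}$-module (this gives the rest of (b)). The main obstacle is arranging the $\rho$-pullbacks to match consistently across the chain of isomorphisms in the inductive dimension step and verifying the cyclicity claim rigorously (in particular handling non-boundary-parallel knot classes in the handlebody via skein relations combined with the $\sS_1$-reduction).
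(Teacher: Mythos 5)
Your identification of the classical shadow is correct and matches the paper: the left action factors through $L \circ f$, both maps respect the Frobenius map, so $\cF(x)$ acts as the scalar $f^*(L^*(\rho))(x)$. Your dimension computation by induction on genus (cutting along a nonseparating compressing disk and using Lemmas \ref{key}, \ref{key2}, \ref{QF}, \ref{change}) is a plausible alternative to the paper, which simply quotes Theorem~4.10 of \cite{wang2023frobenius} for $\dim_\bC \S(H)_\rho = N^{3(g+k-1)}$; you correctly flag that matching the $\rho$-pullbacks across that chain of isomorphisms still needs to be done carefully.

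The real problem is in your derivation of (a). You claim that a cyclic $\S(\sgk)_{\rho''}$-module of dimension equal to the PI-degree forces $\S(\sgk)_{\rho''}\cong \mathrm{Mat}_{d}(\bC)$, but that inference is false. For the quantum plane $A = \bC\langle x,y\rangle/(xy-qyx)$ with $q$ a primitive $N$th root of unity, the fiber at the maximal central ideal $(x^N,y^N)$ is a local $N^2$-dimensional algebra with nilpotents, which nevertheless admits a cyclic $N$-dimensional module (e.g.\ the quotient by the left ideal $(y)$); it is not a matrix algebra. The point is that without already knowing the action is \emph{faithful} (equivalently, that the quotient is semisimple), cyclicity plus the dimension count does not pin down the algebra, and faithfulness is precisely what is at stake. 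The paper avoids this circularity entirely: it first establishes (a) by an independent geometric criterion --- the curves $L(f(\alpha(p)))$ bound disks in $H_g$, so Theorem~8.1 of \cite{karuo2023classification} puts $L^*(\rho)$ in the Azumaya locus --- and only then deduces (b), since once $\S(\sgk)_{\rho''}\cong\mathrm{Mat}_d(\bC)$ any $d$-dimensional module is automatically the unique simple one. That geometric input is the essential ingredient your proposal is missing; the cyclicity claim, which you also leave open, is then not needed at all.
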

 \begin{proof}
(a) For each marked point $p$ of $\sgk$, we have
 	$L(f(\alpha(p)))$ is trivial in $H_{g}$ (that is, it bounds an embedded disk in $H_g$). Then, for any $\rho\in\M(\sS_1(\hgk))$,
 	Theorem 8.1 in \cite{karuo2023classification} shows $$L^*(\rho)\in\mathcal{A}(\S(\pgk)).$$

 (b) From the above discussion, we know $\S(\sgk)$ has a left action on $\S(\hgk)_{\rho}$ (induced by $f$). For any stated tangle $\alpha$ in $\sgk\times [0,1]$ and any stated tangle $\beta$ in $\hgk$, we have
 	\begin{align*} 
 	\cF(\alpha)\cdot (\beta\otimes 1 )= &(L_*(f_*(\cF(\alpha)))\cup \beta)\otimes 1 = (\cF(L_*(f_*(\alpha)))\cup\beta)\otimes 1
 	\\ = &(L_*(f_*(\alpha))\cdot\beta)\otimes 1
 	= \beta\otimes L_*(f_*(\alpha))\cdot 1 = \rho(L_*(f_*(\alpha)))\beta\otimes 1,
 	 \end{align*}
 	where we regard $\alpha$ as an element in $\sS_1(\sgk)$  and regard $\beta$ as an element in $\S(\hgk)$. Then the classical shadow of the representation $\S(\hgk)_{\rho}$ (as a representation of $\S(\sgk)$) is $f^*(L^*(\rho))$.
 	Thus this representation reduces to a representation of $\S(\sgk)_{f^*(L^*(\rho))}$. We have $\S(\sgk)_{f^*(L^*(\rho))}\simeq \text{Mat}_{N^{3(g+k-1)}}(\bC)$ because $L^*(\rho)\in\mathcal{A}(\S(\pgk))$
 	and $f$ is an isomorphism. From Theorem 4.10 in \cite{wang2023frobenius}, we know $\text{dim}_{\bC} \S(\hgk)_{\rho} = N^{3(g+k-1)}$. Then $\S(\hgk)_{\rho}$ is an irreducible representation of $\S(\sgk)_{f^*(L^*(\rho))}$. This completes the proof.
 \end{proof}


To distinguish two copies of $H$. We use $H^{+}$ and $H^{-}$ to denote them (that is $H^{+} = H^{-} = H$, but we denote these two copies with different notations).
We also use $\partial H^{+}$ to denote $\pgk$ and use $\partial H^{-}$ to denote $\mgk$.

We suppose $H$ has a fixed orientation, and the orientations of $\pgk$ and $\mgk$ are inherited from $H$.
Let $f$ be an isomorphism from $\sgk$ to $\pgk$, and let $g$ be an isomorphism from $\sgk$ to $\mgk$. Then $f\circ g^{-1}$ is an isomorphism from $\partial H^{-}$ to $\partial H^{+}$ (here we require $f\circ g^{-1}$ is an orientation reversing isomorphism).

We use $H^{-}\cup_{f\circ g^{-1}} H^{+}$ to denote $(H^{-}\cup H^{+})/(x = f(g^{-1}(x)),x\in \partial H^{-})$. Then 
$H^{-}\cup_{f\circ g^{-1}} H^{+}$ is a marked three manifold. Actually, there exists an oriented closed three manifold $M$ such that $M_k$ is isomorhic to $H^{-}\cup_{f\circ g^{-1}} H^{+}$. Meanwhile, for any oriented connected closed three manifold  $M$ and any positive integer $k$, there exist a non-negative integer $g$ and isomorphisms 
$f:\sgk\rightarrow \pgk = \partial H^{+} ,\;g:\sgk\rightarrow \mgk=\partial H^{-}$ such that $H^{-}\cup_{f\circ g^{-1}} H^{+}$ is isomorphic to $M_k$.

We use $L_+$ (respectively $L_{-}$) to denote the obvious embedding from $H^{+}$ (respectively $H^{-}$) to $H^{-}\cup_{f\circ g^{-1}} H^{+}$. Then we have $L_+\circ f = L_- \circ g,$ which is an 
embedding from $\sgk$ to $H^{-}\cup_{f\circ g^{-1}} H^{+}$.
Then $L_+\circ f$ induces an algebra homomorphism 
$$(L_+\circ f)_* = (L_+)_{*}\circ f_*:\sS_1(\sgk)\rightarrow \sS_1(H^{-}\cup_{f\circ g^{-1}} H^{+}).$$
It further induces a map
$$(L_+\circ f)^* = f^{*}\circ(L_+)^{*}:\M(\sS_1(H^{-}\cup_{f\circ g^{-1}} H^{+}))\rightarrow\M(\sS_1(\sgk)).$$

 Suppose $R$ is a commutative unital ring, $U$ and $V$ are two $R$-modules, and $I$ is an ideal of $R$. We state the following results for classical module theory without giving proofs.
\begin{align}\label{eq_U}
    \text{We have } U\otimes_{R} R/I\simeq \frac{U}{I\cdot U}\text{ as }  R \text{-modules}.
\end{align}
 \begin{align}\label{eq_UV}
    \text{We have } U\otimes_{R} V\simeq \frac{U}{I\cdot U}\otimes_{R/I} V\text{ as }  R \text{-modules if }I\cdot V=0.
\end{align}

\begin{theorem}\label{main}
	Let $k$ be a positive integr, let $f$ (respectively $g$) be an isomorphism from $\sgk$ to 
	$\partial H^{+}$ (respectively $\partial H^{-}$) such that $f\circ g^{-1}$ is orientation reversing isomorphism, and let $\rho\in \M(\sS_1(H^{-}\cup_{f\circ g^{-1}} H^{+}))$. Then we have the followings:

(a) $(L_+\circ f)^*(\rho)\in\mathcal{A}(\S(\sgk))$

(b)	$\text{dim}_{\mathbb{C}} \S(H^{-}\cup_{f\circ g^{-1}} H^{+})_{\rho} = 1.$
	
\end{theorem}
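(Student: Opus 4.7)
My plan is to handle the two parts in sequence, with part (a) as a direct consequence of Theorem \ref{Azu} and part (b) as a gluing argument across the Heegaard surface. For part (a), set $\rho_+:=(L_+)^*(\rho)\in\M(\sS_1(H^+))$ and apply Theorem \ref{Azu}(a) to obtain $L^*(\rho_+)\in\mathcal{A}(\S(\pgk))$, where $L:\pgk\times[0,1]\hookrightarrow H^+$ is the neighborhood embedding. Since $f:\sgk\to\pgk$ is an isomorphism of marked surfaces, the induced algebra isomorphism carries the Azumaya locus of $\S(\pgk)$ onto that of $\S(\sgk)$; unwinding the composition of functoriality maps gives $(L_+\circ f)^*(\rho)=f^*(L^*(\rho_+))\in\mathcal{A}(\S(\sgk))$, proving (a).

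For part (b), set $\sigma:=(L_+\circ f)^*(\rho)$. The identity $L_+\circ f=L_-\circ g$ as set maps $\sgk\to M_k$, together with the fact that a classical skein element in a collar of the Heegaard surface can be isotoped to either side, yields $\sigma=(L_-\circ g)^*(\rho)$ as well. Applying Theorem \ref{Azu}(b) to $(L_+)^*(\rho)$ (left action via $f$) and its analogous right-module version to $(L_-)^*(\rho)$ (right action via $g$, using that $f\circ g^{-1}$ reverses orientation) yields $V^+:=\S(H^+)_{(L_+)^*(\rho)}$ as a simple left $\S(\sgk)$-module and $V^-:=\S(H^-)_{(L_-)^*(\rho)}$ as a simple right $\S(\sgk)$-module, both having classical shadow $\sigma$ and $\bC$-dimension $n:=N^{3(g+k-1)}$. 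Since $\sigma\in\mathcal{A}(\S(\sgk))$, the algebra $\S(\sgk)_\sigma$ is isomorphic to $\mathrm{Mat}_n(\bC)$, and the standard fact that the balanced tensor product of a simple left and a simple right module over a matrix algebra has dimension one gives
\[
\dim_\bC\bigl(V^+\otimes_{\S(\sgk)_\sigma}V^-\bigr)=1.
\]

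The crux is to identify $\S(M_k)_\rho$ with this balanced tensor product. The plan is to build a bilinear pairing
\[
\Psi:\S(H^+)\otimes_\bC\S(H^-)\to\S(M_k),\qquad \alpha\otimes\beta\mapsto (L_+)_*(\alpha)\cup(L_-)_*(\beta),
\]
defined after a generic isotopy making the two tangles disjoint inside $M_k$, and to verify that upon reduction by $\rho$ it factors through a surjection $\tilde\Psi:V^+\otimes_{\S(\sgk)_\sigma}V^-\twoheadrightarrow \S(M_k)_\rho$. Descent to the balanced tensor product encodes that, for $\gamma\in\S(\sgk)$, inserting $\gamma$ in a collar of the Heegaard surface yields the same class in $\S(M_k)_\rho$ whether the collar is pushed into the $H^+$-side or the $H^-$-side; this should follow by combining the transparency of Frobenius images (as exploited in Lemmas \ref{splitting} and \ref{QF}) with Theorem \ref{1.1}. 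Given the surjection, the source has dimension one and the empty tangle provides a nonvanishing class in $\S(M_k)_\rho$, forcing $\dim_\bC\S(M_k)_\rho=1$.

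The main obstacle will be proving descent and surjectivity rigorously, because $\sgk$ is interior to $M_k$ and Theorem \ref{1.1} applies only to splittings along disks with boundary on $\partial M_k$. My plan is to first use Lemma \ref{key2} to introduce auxiliary markings on $\partial M_k$, rendering a system of properly embedded meridian disks of the Heegaard splitting available, then apply Theorem \ref{1.1} iteratively along these disks to inject $\S(M_k)_\rho$ into the representation-reduced skein of a disjoint union $H^+\sqcup H^-$ with extra markings, and finally use the tensor isomorphism \eqref{union} together with Lemma \ref{key2} to match this injection with $V^+\otimes_{\S(\sgk)_\sigma}V^-$.
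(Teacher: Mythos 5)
Part (a) of your proposal matches the paper's argument exactly: apply Theorem \ref{Azu}(a) to $(L_+)^*(\rho)$ via Theorem 8.1 of \cite{karuo2023classification}, then transport across the isomorphism $f$.

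For part (b) your high-level algebra is right --- $\S(M_k)_\rho$ should be identified with the balanced tensor product $V^-\otimes_{\S(\sgk)_\sigma}V^+$, which over a matrix algebra has dimension one --- but the route you propose for proving that identification has a genuine gap. You want to re-derive the gluing along the Heegaard surface from the splitting map of Theorem \ref{1.1}, iterating over ``properly embedded meridian disks of the Heegaard splitting.'' Those disks do not exist: a meridian disk of $H^\pm$ has its entire boundary on the Heegaard surface $\sgk\subset \operatorname{int}(M_k)$, so it is not properly embedded in $M_k$. Adding extra markings to the sphere components of $\partial M_k$ via Lemma \ref{key2} changes only the marking data, not the topology, and cannot push an interior curve onto $\partial M_k$. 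More fundamentally, no sequence of disk cuts in the sense of $\operatorname{Cut}_{(D,u)}$ (disks with $\partial D\subset\partial M_k$) can separate $M_k$ into $H^-\sqcup H^+$: in the simplest example $M=S^3$, $k=1$, $g=1$, the manifold $M_1$ is a $3$--ball, and cutting a ball along any properly embedded disk yields two balls, never two solid tori. So Theorem \ref{1.1} is not the right tool here and the descent/surjectivity step in your sketch cannot be carried out this way.

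The paper sidesteps all of this by \emph{quoting} the gluing isomorphism
\[
\S(H^{-})\otimes_{\S(\sgk)} \S(H^{+})\;\simeq\; \S(H^{-}\cup_{f\circ g^{-1}} H^{+})
\]
(Theorem 6.5 in \cite{costantino2022stated}) as a black box at the level of the full stated skein modules. It then reduces modulo $\rho$ using only elementary commutative-algebra facts about tensor products, namely \eqref{eq_U}, \eqref{eq_UV}, the surjectivity of $f_*:\sS_1(\sgk)\to\sS_1(H^\pm)$ recorded in \eqref{iso_rho}, and the two-sided action formula \eqref{mod}, to obtain
\[
\S(M_k)_{\rho}\;\simeq\; \S(H^{-})_{(L_-)^*(\rho)}\otimes_{\S(\sgk)_{(L_+\circ f)^*(\rho)}} \S(H^{+})_{(L_+)^*(\rho)}
\]
as an isomorphism, not merely a surjection; this both gives the upper bound and settles nonvanishing simultaneously, without needing a separate ``empty tangle is nonzero'' argument. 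If you replace your disk-splitting plan with a citation of that gluing theorem and the tensor bookkeeping above, the rest of your outline (simple left/right modules over $\mathrm{Mat}_{N^{3(g+k-1)}}(\bC)$ via Theorem \ref{Azu}(b)) goes through and matches the paper.
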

\begin{proof}
	
(a) The proof is similar with the proof for (a) in Theorem  \ref{Azu}.
	Theorem 8.1 in \cite{karuo2023classification} shows $$(L_+\circ f)^*(\rho)\in\mathcal{A}(\S(\sgk)),$$
	for any $\rho\in \M(\sS_1(H^{-}\cup_{f\circ g^{-1}} H^{+})).$

(b) We will also  regard $f$ (respectively $g$) as an embedding from $\sgk$  to $H^{+}$ (respectively $H^{-}$).
	Here we use a technique used in page  48 in
	\cite{frohman2023sliced}. We know $\S(\sgk)$ has a left (respectively right) action  on $\S(H^{+})$ (respectively $\S(H^{-})$).  
	 From Theorem 6.5 in \cite{costantino2022stated}, we have 
	 \begin{align}\label{eq_handle}
	 \S(H^{-})\otimes_{\S(\sgk)} \S(H^{+})\simeq \S(H^{-}\cup_{f\circ g^{-1}} H^{+}).
      \end{align}
	 This isomorphism is given by sending $\alpha\otimes_{\S(\sgk)}\beta $
	 to $L_-(\alpha)\cup L_+(\beta)$, where $\alpha$ (respectively $\beta$) is a stated tangle in $H^{-}$ (respectively $H^{+}$).
	 
	 The surjective algebra homomorphism $f_{*}:\sS_1(\sgk)\rightarrow \sS_1(H^{+})$ induces an action of $\sS_1(\sgk)$ on $\S(H^{+})$. For any $\rho^{'}\in\text{MaxSpec}(\sS_1(H^{+}))$, we  have
\begin{align}\label{iso_rho}
\text{Ker}(\rho^{'})\cdot \S(H^{+}) = \text{Ker}(\rho^{'}\circ f_*)\cdot \S(H^{+})
\end{align}
	 because $f_{*}:\sS_1(\sgk)\rightarrow \sS_1(H^{+})$ is surjective.
	 We have the similar discussion for $H^{-}$ and $H^{-}\cup_{f\circ g^{-1}} H^{+}$. From the isomorphism between $\S(H^{-})\otimes_{\S(\sgk)} \S(H^{+})$ and $ \S(H^{-}\cup_{f\circ g^{-1}} H^{+})$, we have $\sS_1(\sgk)$ also acts on $\S(H^{-})\otimes_{\S(\sgk)} \S(H^{+})$. The action is given by
	 \begin{equation}\label{mod}
	  \alpha\cdot (\beta_1\otimes_{\S(\sgk)}\beta_2) = \alpha\cdot\beta_1\otimes_{\S(\sgk)}\beta_2
	 =\beta_1\otimes_{\S(\sgk)}\alpha\cdot \beta_2,
	 \end{equation}
	 where $\alpha\in\sS_1(\sgk),\, \beta_1\in \S(H^{-})$ and $ \beta_2\in \S(H^{+})$. 
	 
	  Then we have 
	 \begin{align*}
	 	&\S(H^{-}\cup_{f\circ g^{-1}} H^{+})_{\rho}\simeq \frac{ \S(H^{-}\cup_{f\circ g^{-1}} H^{+})}{\text{Ker}(\rho)\cdot \S(H^{-}\cup_{f\circ g^{-1}} H^{+})}\\
	 	\simeq &\frac{
 \S(H^{-}\cup_{f\circ g^{-1}} H^{+})}{\text{Ker}((L_+\circ f)^*(\rho))\cdot \S(H^{+}\cup_{f\circ g^{-1}} H^{-})}\\
	 	\simeq &
         \frac{
	 	 \S(H^{-})\otimes_{\S(\sgk)} \S(H^{+})}{
	 	\text{Ker}((L_+\circ f)^*(\rho))\cdot \big( \S(H^{-})\otimes_{\S(\sgk)} \S(H^{+})\big )}\\
	 	\simeq &
	 	\big(\S(H^{-})\otimes_{\S(\sgk)} \S(H^{+})\big)\otimes_{\sS_1(\sgk)} \frac{ \sS_1(\sgk)}{
	 	\text{Ker}((L_+\circ f)^*(\rho))}
	 	\\\simeq
	 	&
	 	\S(H^{-})\otimes_{\S(\sgk)}\big( \S(H^{+})\otimes_{\sS_1(\sgk)}\frac{ \sS_1(\sgk)}{
	 	\text{Ker}((L_+\circ f)^*(\rho))}\big)
	 	\\\simeq &
	 	\S(H^{-})\otimes_{\S(\sgk)}\big( \frac{\S(H^{+})}{
	 	\text{Ker}((L_+\circ f)^*(\rho))\cdot  \S(H^{+})}\big)
	 	\\\simeq &
	 	\S(H^{-})\otimes_{\S(\sgk)}\big( \frac{\S(H^{+})}{
	 	\text{Ker}((L_+)^*(\rho))\cdot  \S(H^{+})}\big)
	 	\\\simeq &
	 	\S(H^{-})\otimes_{\S(\sgk)} \S(H^{+})_{(L_+)^*(\rho)}
	 	\\\simeq &
	 	\S(H^{-})_{(L_-)^*(\rho)}\otimes_{\S(\sgk)_{(L_+\circ f)^*(\rho)}} \S(H^{+})_{(L_+)^*(\rho)},
	 \end{align*}
where the first and the eighth isomorphisms come from definitions, the second and the seventh isomorphisms come from equation \eqref{iso_rho} (or a similar discussion), the third isomorphism comes from \eqref{eq_handle}, the fourth and the sixth isomorphisms come from equation \eqref{eq_U}, the fifth isomorphism comes from equation \eqref{mod}, the last isomorphism comes from equation \eqref{eq_UV}.
From (a), we know $(L_+\circ f)^*(\rho)\in \mathcal{A}(\S(\sgk))$. Then Theorem \ref{Azu} implies that
$$\S(H^{-})_{(L_-)^*(\rho)}\otimes_{\S(\sgk)_{(L_+\circ f)^*(\rho)}} \S(H^{+})_{(L_+)^*(\rho)}\simeq \mathbb{C}^{d}\otimes_{\text{Mat}_{d}(\mathbb{C})}\mathbb{C}^{d}\simeq \mathbb{C},$$
where $d = 3(g+k-1)$.
\end{proof}


 The following Theorem generalizes Theorem 12.2 in \cite{frohman2023sliced} (they proved a parallel result for the non-stated case) to the stated case. But, for the non-stated case, we have restrictions for $\rho$. 

\begin{theorem}\label{main2}
	Let $M$ be any oriented connected closed three manifold, and let $k$ be any positive integer. For any $\rho\in\M(\sS_1(M_{k}))$, we have $\dim_{\bC}\S(M_k)_{\rho} = 1$.
	
\end{theorem}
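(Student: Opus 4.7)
The plan is to reduce Theorem \ref{main2} to Theorem \ref{main} via a Heegaard decomposition of $M$. This reduction has essentially been prepared in the paragraph preceding Theorem \ref{main}, which observes that for any oriented connected closed three-manifold $M$ and any positive integer $k$, there exist a non-negative integer $g$ together with isomorphisms $f\colon\Sigma_{g,k}\to\partial H^{+}$ and $g\colon\Sigma_{g,k}\to\partial H^{-}$ with $f\circ g^{-1}$ orientation-reversing, such that $M_k \simeq H^{-}\cup_{f\circ g^{-1}} H^{+}$ as marked three-manifolds. This comes from the classical Heegaard splitting of $M$, arranged so that the $k$ removed open balls sit in a neighborhood of the Heegaard surface and each newly created marked sphere boundary component is cut by that surface into the pair of boundary disks glued by $f\circ g^{-1}$.

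Given this, my proof would proceed in two short steps. First, I fix such a decomposition and apply Lemma \ref{embedding} to the induced isomorphism $M_k \simeq H^{-}\cup_{f\circ g^{-1}} H^{+}$ of marked three-manifolds: this yields a linear isomorphism $\S(M_k) \simeq \S(H^{-}\cup_{f\circ g^{-1}} H^{+})$ that behaves well with respect to the Frobenius map, and in particular an algebra isomorphism $\sS_1(M_k) \simeq \sS_1(H^{-}\cup_{f\circ g^{-1}} H^{+})$ that transports any $\rho\in\M(\sS_1(M_k))$ to some $\widetilde{\rho}\in\M(\sS_1(H^{-}\cup_{f\circ g^{-1}} H^{+}))$. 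By Lemma \ref{con}, this descends to a linear isomorphism
$$
\S(M_k)_{\rho} \simeq \S(H^{-}\cup_{f\circ g^{-1}} H^{+})_{\widetilde{\rho}}.
$$
Second, I apply Theorem \ref{main}(b) to $\widetilde{\rho}$ to conclude that the right-hand side has complex dimension $1$, whence $\dim_{\bC}\S(M_k)_{\rho} = 1$.

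There is no substantive obstacle in this step; all the real work has been carried out in Theorem \ref{main}(b), which in turn rests on the Azumaya statement of Theorem \ref{Azu} combined with the handlebody tensor-product decomposition from Theorem 6.5 of \cite{costantino2022stated}. The only minor point requiring care is the initial topological claim that the Heegaard splitting of $M$ can be arranged compatibly with the marked structure of $M_k$, so that each marked sphere boundary component becomes exactly one glued pair of boundary disks of $H^{\pm}$; this is a routine positioning argument obtained by isotoping the Heegaard surface to meet each removed ball transversally in a single disk separating its marking into two half-arcs.
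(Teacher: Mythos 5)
Your proof is correct and takes essentially the same route as the paper: both reduce to Theorem \ref{main} by identifying $M_k$ with some $H^{-}\cup_{f\circ g^{-1}} H^{+}$ via a Heegaard splitting. The paper states this reduction in a single sentence; you have merely made explicit the intermediate invocations of Lemma \ref{embedding} and Lemma \ref{con} that justify transporting the dimension count along the isomorphism of marked three-manifolds, which is a reasonable expansion but not a different argument.
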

\begin{proof}
	Since $M_{k}$ is isomorphic to some $H^{-}\cup_{f\circ g^{-1}} H^{+}$, Theorem \ref{main} implies the Theorem.
\end{proof}

\begin{corollary}
	Suppose $n$ is a positive integer. For each $1\leq i\leq n$,
	let $M(i)$ be an oriented connected closed three manifold, and let $k_i$ be a positive integer. For any $\rho\in\M(\sS_1(\cup_{1\leq i\leq n}M(i)_{k_i}))$, we have $\dim_{\bC}\S(\cup_{1\leq i\leq n}M(i)_{k_i})_{\rho} = 1$.
\end{corollary}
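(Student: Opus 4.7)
The plan is to reduce to Theorem \ref{main2} via the disjoint-union decomposition of representation-reduced stated skein modules that was developed in section \ref{sec3}. First, I would iterate the linear isomorphism of equation \eqref{union} to obtain, for each $q^{1/2}$, a natural linear isomorphism
$$\kappa_n : \bigotimes_{i=1}^{n} \S(M(i)_{k_i}) \; \xrightarrow{\ \sim\ } \; \S\bigl(\textstyle\bigcup_{1\leq i\leq n} M(i)_{k_i}\bigr),$$
and check, by a straightforward induction on $n$, that $\kappa_n$ behaves well with respect to the Frobenius map; the base case $n=2$ is exactly the content of section \ref{sec3}.

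Next, I would use the identification $\M(\sS_1(\cup_{1\leq i\leq n} M(i)_{k_i})) \simeq \prod_{i=1}^{n} \M(\sS_1(M(i)_{k_i}))$, obtained by iterating the two-factor statement already noted in section \ref{sec3}, so that the given maximal ideal $\rho$ corresponds to a tuple $(\rho_1,\dots,\rho_n)$ with $\rho_i\in \M(\sS_1(M(i)_{k_i}))$. Applying Lemma \ref{con} to $\kappa_n$ (the same reasoning that produced equation \eqref{union} in the two-factor case) then yields a linear isomorphism
$$\S\bigl(\textstyle\bigcup_{1\leq i\leq n} M(i)_{k_i}\bigr)_{\rho} \;\simeq\; \bigotimes_{i=1}^{n} \S(M(i)_{k_i})_{\rho_i}.$$
Finally, Theorem \ref{main2} gives $\S(M(i)_{k_i})_{\rho_i}\simeq \bC$ for each $i$, so the tensor product is one-dimensional over $\bC$, as required.

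I do not expect any substantive obstacle: the deep input is Theorem \ref{main2}, and the only new ingredient is the routine inductive upgrade of the two-factor gluing isomorphism of section \ref{sec3} to $n$ factors, together with the bookkeeping that $\kappa_n^{*}$ decomposes $\rho$ into the components $\rho_i$ used above.
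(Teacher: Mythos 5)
Your proposal is correct and matches the paper's approach exactly: the paper's proof is simply ``Equation \eqref{union} and Theorem \ref{main2},'' and your write-up spells out precisely the inductive iteration of $\kappa$ and the decomposition of $\rho$ that this terse citation implicitly relies on.
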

\begin{proof}
	Equation \eqref{union} and Theorem \ref{main2}.
\end{proof}

\bibliography{ref.bib}

\begin{thebibliography}{10}

\bibitem{bloomquist2020chebyshev}
Wade Bloomquist and Thang~TQ L{\^e}.
\newblock {The Chebyshev--Frobenius homomorphism for stated skein modules of
  3-manifolds}.
\newblock {\em Mathematische Zeitschrift}, pages 1--43, 2020.

\bibitem{bonahon2011quantum}
Francis Bonahon and Helen Wong.
\newblock {Quantum traces for representations of surface groups in $SL_2$}.
\newblock {\em Geometry \& Topology}, 15(3):1569--1615, 2011.

\bibitem{bonahon2016representations}
Francis Bonahon and Helen Wong.
\newblock {Representations of the Kauffman bracket skein algebra I: invariants
  and miraculous cancellations}.
\newblock {\em Inventiones mathematicae}, 204:195--243, 2016.

\bibitem{bullock1997rings}
Doug Bullock.
\newblock {Rings of SL2(C)-characters and the Kauffman bracket skein module}.
\newblock {\em Commentarii Mathematici Helvetici}, 72(4):521--542, 1997.

\bibitem{costantino2022stated1}
Francesco Costantino and Thang~TQ L{\^e}.
\newblock {Stated skein algebras of surfaces}.
\newblock {\em Journal of the European Mathematical Society},
  24(12):4063--4142, 2022.

\bibitem{costantino2022stated}
Francesco Costantino and Thang~TQ Le.
\newblock {Stated skein modules of 3-manifolds and TQFT}.
\newblock {\em arXiv preprint arXiv:2206.10906}, 2022.

\bibitem{frohman2023sliced}
Charles Frohman, Joanna Kania-Bartoszynska, and Thang L{\^e}.
\newblock Sliced skein algebras and geometric kauffman bracket.
\newblock {\em arXiv preprint arXiv:2310.06189}, 2023.

\bibitem{karuo2023classification}
HIROAKI Karuo and JULIEN Korinman.
\newblock Classification of semi-weight representations of reduced stated skein
  algebras.
\newblock {\em arXiv preprint arXiv:2303.09433}, 2023.

\bibitem{le2018triangular}
Thang~TQ L{\^e}.
\newblock {Triangular decomposition of skein algebras}.
\newblock {\em Quantum Topology}, 9(3):591--632, 2018.

\bibitem{le2021stated}
Thang~TQ L{\^e} and Adam~S Sikora.
\newblock {Stated SL (n)-skein modules and algebras}.
\newblock {\em arXiv preprint arXiv:2201.00045}, 2021.

\bibitem{przytycki2006skein}
J{\'o}zef~H Przytycki.
\newblock {Skein modules of 3-manifolds}.
\newblock {\em arXiv preprint math/0611797}, 2006.

\bibitem{turaev1988conway}
VG~Turaev.
\newblock {The Conway and Kauffman modules of a solid torus, Zap}.
\newblock {\em Nauchn. Sem. Leningrad. Otdel. Mat. Inst. Steklov.(LOMI)},
  167:79--89, 1988.

\bibitem{wang2023finiteness}
Zhihao Wang.
\newblock {Finiteness and dimension of stated skein modules over Frobenius}.
\newblock {\em arXiv preprint arXiv:2309.10920}, 2023.

\bibitem{wang2023frobenius}
Zhihao Wang.
\newblock {On Frobenius algebras obtained from stated skein algebras}.
\newblock {\em arXiv preprint arXiv:2310.13116}, 2023.

\bibitem{wang2023stated}
Zhihao Wang.
\newblock {On stated $ SL (n) $-skein modules}.
\newblock {\em arXiv preprint arXiv:2307.10288}, 2023.

\bibitem{yu2023center}
Tao Yu.
\newblock Center of the stated skein algebra.
\newblock {\em arXiv preprint arXiv:2309.14713}, 2023.

\end{thebibliography}

\end{document}